\newcommand{\nc}{\newcommand}
\nc{\dmo}{\DeclareMathOperator}
\dmo{\ra}{\rightarrow}
\dmo{\N}{\mathbb{N}}
\dmo{\Z}{\mathbb{Z}}
\dmo{\R}{\mathbb{R}}
\dmo{\C}{\mathcal{C}}
\dmo{\AC}{\mathcal{AC}}
\dmo{\Mod}{Mod}
\dmo{\PMod}{PMod}
\dmo{\B}{B}
\dmo{\PB}{PB}
\dmo{\Sp}{Sp}
\dmo{\I}{\mathcal{I}}
\dmo{\el}{\ell_{\C}}
\dmo{\NN}{\mathcal{N}}
\dmo{\Tr}{Tr} 
\dmo{\rk}{rk}
\dmo{\Aut}{Aut}
\dmo{\Teich}{Teich}
\dmo{\Ind}{Ind}
\dmo{\cd}{cd}
\dmo{\forget}{Forget}
\dmo{\Homeo}{Homeo}
\dmo{\Diffeo}{Diffeo}
\dmo{\push}{Push}
\dmo{\capd}{Cap}
\dmo{\CG}{CG}
\dmo{\UCG}{UCG}
\dmo{\FCG}{FCG}
\dmo{\PFB}{PFB}
\dmo{\BG}{B}
\dmo{\PBG}{PB}
\nc{\nt}{\newtheorem}
\newtheorem{thm}{{\bf Theorem}}[section]
\newtheorem{lem}[thm]{{\bf Lemma}}
\newtheorem{prop}[thm]{{\bf Proposition}}
\newtheorem{remark}[thm]{Remark}
\newtheorem{conj}[thm]{Conjecture}
\newtheorem{definition}[thm]{Definition}
\numberwithin{equation}{section}
\title[Torelli groups of partitioned surfaces]{Topological and dynamical properties of Torelli groups of partitioned surfaces}
\author{Hyungryul Baik}
\address{%
		Department of Mathematical Sciences, KAIST,
		291 Daehak-ro Yuseong-gu, Daejeon, 34141, South Korea 
}
\email{%
        hrbaik@kaist.ac.kr
}
\author{Hyunshik Shin}
\address{%
        Department of Mathematics, University of Georgia,
		Athens, GA 30602
}
\email{%
        hyunshik.shin@uga.edu
}
\author{Philippe Tranchida}
\address{%
		Department of Mathematical Sciences, KAIST,
		291 Daehak-ro Yuseong-gu, Daejeon, 34141, South Korea 
}
\email{%
        ptranchida@kaist.ac.kr
}
\begin{document}
\begin{abstract}
Putman introduced a notion of a partitioned surface which is a surface with boundary with 
decoration restricting how the surface can be embedded into larger surfaces, and defined the Torelli group of the partitioned surfaces. In this
paper, we study some topological and dynamical aspects of the Torelli groups of partitioned surfaces. More precisely, first we obtain upper and lower bounds on the cohomological dimension of Torelli groups of partitioned surfaces and show that those two bounds coincide when at most three boundary components are grouped together in the partition of the boundary. Second, we study the asymptotic translation lengths of Torelli groups of partitioned surfaces on the corresponding curve complexes. We show that the minimal asymptotic translation length asymptotically behaves almost like the reciprocal of the Euler characteristic of the surface. This generalizes the previous result of the first and second authors on Torelli groups for closed surfaces. 

\end{abstract}

\maketitle

%
%

\medskip
\section{Introduction}	\label{section:introduction}

Let $S$ be a connected orientable surface of finite type. When it
is closed with genus $g$, we denote it by $S$. In this
paper, we will also consider surfaces with punctures or boundary
components but not both at the same time. For notation, we use
$S_{g}^n$ to denote the surface of genus $g$ with $n$ boundary
components, and $S_{g,n}$ to denote the surface of genus $g$
with $n$ punctures. In any case, we always assume $\chi(S) < 0$. 
The \textit{mapping class group} of $S$, denoted by $\Mod(S)$, is the 
group of homotopy classes of orientation-preserving homeomorphisms of $S$.
For a surface $S_g^n$ with boundary components, $\Mod(S_g^n)$ consists of 
orientation-preserving homeomorphisms up to homotopy fixing boundary pointwise. 

For a closed surface $S_g$, one of the important proper normal subgroups of $\Mod(S)$ is the \textit{Torelli group} $\I(S_g)$,
which is the kernel of the following symplectic representation
$$\Psi: \Mod(S_g) \ra \Sp(2g,\Z).$$
This homomorphism is induced by the action on the first homology $H_1(S_g,\Z)$
and the algebraic intersection number gives a symplectic structure.
For more discussion on Torelli group, see \cite{FarbMargalit12} or \cite{Johnson83}.

For a surface $S_g^n$ with boundary components, if $n>1$, the algebraic intersection number becomes degenerate.
Hence there is no such symplectic representation and the analogous definition of $\I(S_g^n)$ does not work.
In \cite{putman2007cutting}, Putman introduced the notion of a partitioned surface to 
develop a ``Torelli functor'' on the category of partitioned surfaces and extend the usual definition of Torelli group
of a closed surface.
For a surface $S_g^n$ with a partition $P$ of boundary components, let $\I(S_g^n,P)$ be the Torelli group  for the pair $(S_g^n,P)$ in the sense of Putman. When $\partial S = \{b_1, \ldots, b_n\}$, we set notations for two extreme cases for $P$ as $P_{\max}=\{\{b_1\}, \{b_2\}, \ldots, \{b_n\}\}$ and $P_{\min}=\{ \{ b_1, b_2, \ldots, b_n \} \}.$ For the definition and review of basic facts, see Section \ref{sec:putman} or \cite{putman2007cutting}. 

For any partition $P$, we compute bounds for the cohomological dimension of $\I(S_g^n,P)$. In [BBM], the authors showed that the cohomological dimension of the Torelli group of a closed surface is equal to $3g -5$. The fact that $3g -5$ is a lower bound for the cohomological dimension was already shown long ago by Mess in an unpublished paper \cite{Mess90}. He used the theory of Poincare duality groups for constructing, by induction, subgroup of the wanted dimension. The authors of [BBM] then showed that $3g -5$ is an upper bound by using the action of the Torelli group on the complex of reduced cycles that they defined in the same paper.

We will deduce a lower bound from the lower bound for the closed surface case and a generalized Birman exact sequence for Torelli groups of partitioned surfaces. For the upper bound, we will extend the Poincare duality groups of Mess by some free abelian subgroup for bridging the gap between the closed case and the boundary case.

Let $S_g^n$ be a surface of genus $g$ with $n$ boundary components. Let $P$ be a partition of its boundaries. We define $P^1 := \{ p \in P \mid |p| = 1 \}$ and $P^2 := \{ p \in P \mid |p| = 2 \}$. Our main theorem about cohomological dimension is 

\begin{thm}
\label{main}
Let $n \geq 1$, and let $|P|$ denote the cardinality of $P$. Then $$3g -4 + 3|P|-|P^1| \leq \cd(\I_g^n,P) \leq 3g -4 +n +(|P^1| + |P^2|)$$
\end{thm}

We remark that this lower bound and upper bounds coincide if the partition $P$ contains only elements of cardinality at most $3$, giving a precise formula for those cases.

Next, we discuss the least asymptotic translation length of 
proper normal subgroups of $\Mod(S_g^n)$ on the curve graph.
This is motivated by the action of Torelli group of a closed surface $S_g$ on the curve graph $\C(S_g)$.
Let $\C(S_g)$ be the \textit{curve graph} of $S_g$ with path metric $d_{\C}(\cdot,\cdot)$,
i.e., each edge in $\C(S_g)$ has length 1.
Let $\ell_{\C}(f)$ be the \textit{asymptotic translation length} of $f \in \Mod(S_g)$ defined by
$$ \ell_{\C} (f) = \liminf_{j \ra \infty} \frac{d_{\C}(\alpha, f^j(\alpha))}{j},$$
where $\alpha$ is an element in $\C(S)$. Note that $\ell_{C}(f)$ is independent of
the choice of $\alpha$.
For any $H \subset \Mod(S_g)$, define $$L_{\C}(H) = \min \{ \el(f) : \, f \in H, \ \textrm{pseudo-Anosov} \}.$$
In \cite{BaikShin18}, the first and second authors proved that 
$$L_{\C}(\I(S_g)) \asymp \frac{1}{g},$$
that is, there are constants $C_1$ and $C_2$, independent of $g$, such that 
$$ \frac{C_1}{g} \leq L_{\C}(\I(S_g)) \leq \frac{C_2}{g}.$$
On the contrary, Gradre--Tsai \cite{GadreTsai11} showed that $L_{\C}(\Mod(S_g)) \asymp 1/g^2$.
Hence the minimal asymptotic translation lengths from $\Mod(S_g)$ and $\I(S_g)$ approach 0
at a different rate, $1/g^2$ and $1/g$, respectively.

This is analogous to the previous results of \cite{Penner91} and \cite{FarbLeiningerMargalit08} regarding 
the stretch factor $\lambda$ of pseudo-Anosov mapping classes.
For any $H \subset \Mod(S_g)$, let us define
$$L(H) = \min \{ \log(\lambda(f)) : \, f \in H \ \textrm{is pseudo-Anosov} \}.$$
Notice that $L(\Mod(S_g))$ can be thought of as the length spectrum of the moduli space of genus $g$ Riemann surfaces.
Penner \cite{Penner91} proved that $L(\Mod(S_g)) \asymp 1/g$. On the contrary, Farb--Leininger--Margalit \cite{FarbLeiningerMargalit08} showed that $L(\I(S_g)) \asymp 1$, and that for level $m$ congruence subgroup $\Mod(S_g)[m]$,
$L(\Mod(S_g)[m]) \asymp 1$ with $m \geq 3$.
Later, Lanier--Margalit \cite{LanierMargalit18} generalized this result and showed that for any proper normal subgroup $N$ of $\Mod(S_g)$,
$L(H) > \log(\sqrt{2})$ for $g >3$.

Inspired by previous works, we generalize the result of \cite{BaikShin18} about Torelli groups to the case of surfaces with boundary components. Our main result along this line is 
\begin{thm} \label{thm:mainthmlength} 
We have 
$$\frac{1}{|\chi(S_g^n)|} \lesssim L_{\C} (\I(S_g^n,P)) \lesssim \frac{1}{|\chi(S_g^n)| -(n-|P|)}.$$ 

In particular, $L_{\C} (\I(S_g^n,P)) \asymp \frac{1}{|\chi(S_g^n)|}$. 
\end{thm}

Note that Gradre--Tsai proved that 
$$L_{\C}(\Mod(S_g^n)) > \frac{1}{18(2g-2+n)^2+30(2g-2+n)-10n}.$$

Hence it is natural to ask if 
$L_{\C}(\Mod(S_g^n)) \asymp \frac{1}{|\chi(S_g^n)|^2}.$ We show that it is not the case. 
\begin{thm} \label{thm:stopdreaming}
Suppose $g < (1/4 - \epsilon) n$ for arbitrarily small $\epsilon > 0$. Then 
$L_{\C}(\Mod(S_g^n)) \asymp 1/|\chi(S_{g,n})|$.
\end{thm} 
It would be interesting to know if we have $L_{\C}(\Mod(S_g^n)) \asymp 1/|\chi(S_{g,n})|$ without any restriction on $(g, n)$.

In Section \ref{sec:putman}, we review the notion of partitioned surfaces and Putman's construction of Torelli groups for partitioned surfaces. 
In section \ref{sec:cohomid}, we obtain bounds on the cohomological dimensions on Torelli groups for partitioned surfaces in the sense of Putman. 
In Section \ref{sec:lowerboundlength}, we obtain the lower bound for Theorem \ref{thm:mainthmlength}. In Section \ref{sec:upperboundlength}, we complete the proof of Theorem \ref{thm:mainthmlength} by obtaining the upper bound. 
In Section \ref{sec:future}, we discuss the current state of the art and prove Theorem \ref{thm:stopdreaming}. 

\medskip
\subsection*{Acknowledgements}
We thank Inhyeok Choi, Chenxi Wu for helpful discussions. 
The first author was supported by the National Research Foundation of Korea(NRF) grant funded by the Korea government(MSIT) (No. 2020R1C1C1A01006912).

%
%

\medskip
\section{Preliminaries}\label{sec:putman} 

\subsection{Cohomological dimension of groups}

We first recall the definition of cohomological dimension, even though we will not actually make any use of the concrete definition.
\begin{definition}
Let $G$ be a group. The \textit{cohomological dimension} of $G$, denoted $\cd(G)$, is the smallest integer $n$ such that
\begin{enumerate}
    \item for any $G$-module $M$ and for every $i > n$, we have $H^i(G,M) = 0 $
    \item There exists some $G$-module $M$ with $H^n(G,M) \neq 0$
\end{enumerate}
\end{definition}

More importantly, we will repeatedly use two key properties of cohomological dimensions that we list here.

\begin{prop}[Monotonicity]
Let $H \subset G$ be a subgroup. Then $\cd(H) \leq \cd(G)$.
\end{prop}

\begin{prop}[Subadditivity]
Let 
\begin{tikzcd}
1 \arrow[r] & H \arrow[r] & G \arrow[r] & Q \arrow[r] & 1
\end{tikzcd}
be a short exact sequence of groups. Then, we have that $\cd(G) \leq \cd(H) + \cd(Q)$.
\end{prop}

We will also need to use some theory about Poincare duality groups. For defining Poincare duality groups, we first need one definition about a finiteness property of groups.

\begin{definition}
A group $G$ is of \textit{type FP} if there exists a resolution of $\mathbb{Z}$ of finite length by finitely generated $\mathbb{Z}G$-modules:
$$
0 \to P_n \to \cdots \to P_0 \to \mathbb{Z} \to 0
$$
such that each $P_i$ is projective.
\end{definition}

A Poincare duality group is a group $G$ of type $FP$ whose cohomology groups satisfy some additional structure. More precisely:
\begin{definition}
A group $G$ is a \textit{Poincare duality group of dimension n} if 
\begin{enumerate}
    \item $G$ is of type FP
    \item $H^i(G,\Z G) = 0 $ if $i \neq n$ and $H^n(G,\Z G) = \Z$
\end{enumerate}
\end{definition}

They key feature of Poincare duality groups we will make use of is that they improve the subadditivity of cohomological dimension.

\begin{prop}
Let 
\begin{tikzcd}
1 \arrow[r] & H \arrow[r] & G \arrow[r] & Q \arrow[r] & 1
\end{tikzcd}
be a short exact sequence of groups such that $H$ and $Q$ are Poincare duality groups. Then $G$ is also a Poincare duality group and $\cd(G) = \cd(H) + \cd(Q)$.
\label{Poincare}
\end{prop}
Finally, we need to be able to recognise some Poincare duality group when we encounter them.

\begin{prop}
The fundamental group of a closed, aspherical n-manifold and free abelian groups of rank $n$ are both Poincare duality group of dimension n.
\label{manifold}
\end{prop}

\subsection{Putman's Construction of Torelli groups for surfaces with boundary} 

Let $S_g^n$ be a surface of genus $g$ with $n$ boundary components, say with the labels $\{b_1, \ldots, b_n\}$. 
Let $P$ be a partition of the set of boundary components of $S_g^n$, then we call the pair $(S_g^n, P)$ a partitioned surface. 
For instance, one can think of an example where $n = 7$ and $P = \{ \{b_1, b_2, b_3\}, \{b_4, b_5\}, \{b_6\}, \{b_7\} \}$. (See Figure \ref{fig:capping}.)

Let $(S_g^n, P)$ be a partitioned surface. As in \cite{putman2007cutting}, we define a capping of $(S_g^n, P)$ as an embedding $S_g^n \xhookrightarrow{} S_g$ 
where the set of the sets of boundary components of the connected components of $S_g \setminus S_g^n$ is exactly the partition $P$.
The Torelli group $\mathcal{I}(S_g^n, P)$ of the partitioned surface $\mathcal{I}(S_g^n, P)$ is defined by
$$ \mathcal{I}(S_g^n, P) := \iota_*^{-1} (\mathcal{I}_g) < \Mod(S_g^n).$$
for any capping $\iota: S_g^n \xhookrightarrow{} S_g$.
Putman proved that this definition is independent of the choice of capping $\iota$.

\begin{figure}[t]	
\centering
	\begin{tikzpicture}[scale=.6]
		\draw[thick] (1,5.4) to [out=-10, in=-170] (7,5.3);
        \draw[thick] (1,0) to [out=10, in=170] (7,0);
        \foreach \y in {0, 2, 4}{
        \draw[thick] (1,0+\y) to [out=70, in=-70] (1,1.4+\y);
        \draw[thick] (1,0+\y) to [out=110, in=-110] (1,1.4+\y);
        }
        \foreach \y in {0, 1.4, 2.8, 4.2}{
        \draw[thick] (7,0+\y) to [out=70, in=-70] (7,1.1+\y);
        \draw[thick] (7,0+\y) to [out=110, in=-110] (7,1.1+\y);
        }
        \foreach \y in {0,2}{
        \draw[thick] (1,1.4+\y) to [out=10, in=-10] (1,2+\y);
        }
        \foreach \y in {0,1.4,2.8}{
        \draw[thick] (7,1.1+\y) to [out=180, in=-180] (7,1.4+\y);
        }
        \draw[->,thick] (8.2,2.6) to (10.2,2.6);
        \draw[thick] (10+3,5.4) to [out=-10, in=-170] (16+3,5.3);
        \draw[thick] (10+3,0) to [out=10, in=170] (16+3,0);
        \foreach \y in {0, 2, 4}{
        \draw[thick] (10+3,0+\y) to [out=70, in=-70] (10+3,1.4+\y);
        \draw[thick, dotted] (10+3,0+\y) to [out=110, in=-110] (10+3,1.4+\y);
        }
        \foreach \y in {0, 1.4, 2.8, 4.2}{
        \draw[thick, dotted] (16+3,0+\y) to [out=70, in=-70] (16+3,1.1+\y);
        \draw[thick] (16+3,0+\y) to [out=110, in=-110] (16+3,1.1+\y);
        }
        \foreach \y in {0,2}{
        \draw[thick] (10+3,1.4+\y) to [out=0, in=0] (10+3,2+\y);
        \draw[thick] (10+3,1.4+\y) to [out=180, in=180] (10+3,2+\y);
        }
        \foreach \y in {0,1.4,2.8}{
        \draw[thick] (16+3,1.1+\y) to [out=180, in=-180] (16+3,1.4+\y);
        }
        \draw[thick] (16+3,1.1+2.8) to [out=0, in=0] (16+3,1.4+2.8);
        \draw[thick] (13,0) to [out=180, in=-180] (13,5.4);
        \draw[thick] (19,2.8) to [out=0, in=0] (19,5.3);
        \draw[thick] (19,1.4) to [out=0, in=0] (19,2.5);
        \draw[thick] (19,0) to [out=0, in=0] (19,1.1);
        \foreach \y in {0,12}{
        \node at (1.5+\y, .7) {$b_1$};
        \node at (1.5+\y, 2.7) {$b_2$};
        \node at (1.5+\y, 4.6) {$b_3$};
        \node at (6.5+\y, .7) {$b_7$};
        \node at (6.5+\y, 1.9) {$b_6$};
        \node at (6.5+\y, 3.3) {$b_5$};
        \node at (6.5+\y, 4.6) {$b_4$};
        }
        \foreach \x in {0, 12} {
        \foreach \y in {0, 2} {
        \draw[thick] (3.5+\x,1.5+\y) to [out=-30,in=-150] (4.5+\x,1.5+\y);
        \draw[thick] (3.6+\x,1.45+\y) to [out=30,in=150] (4.4+\x,1.45+\y);
        }
        }
        \draw[thick] (3.5+8.2,1.5+1) to [out=-30,in=-150] (4.5+8.2,1.5+1);
        \draw[thick] (3.6+8.2,1.45+1) to [out=30,in=150] (4.4+8.2,1.45+1);
	\end{tikzpicture}
	\caption{A capping of $S_2^7$ with $P = \{ \{b_1, b_2, b_3\}, \{b_4, b_5\}, \{b_6\}, \{b_7\} \}.$}
	\label{fig:capping}
\end{figure}
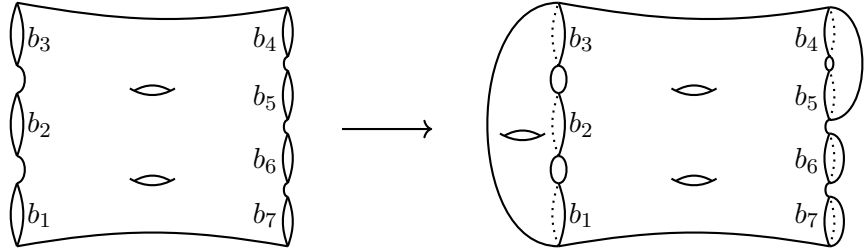


\subsection{Action on Homology groups}
Putman defined some version of homology group $H_1^P(S_g^n)$ depending only on the surface and the partition $P$. 
For completion, we describe the definition of $H_1^P(S_g^n)$ here (see \cite{putman2007cutting} for detail). 

Consider a partitioned surface $(S,P)$ and enumerate the partition $P$ as $$P=\{ \{ b_1^1, \ldots, b_{k_1}^1\}, \cdots, \{b_1^m, \ldots, b_{k_m}^m \} \}.$$
Orient the boundary components $b_i^j$ so that $\sum_{i,j}[b_i^j] = 0$ in $H_1(S)$. Define
\begin{align*}
	\partial H_1^P( S) &= \langle [b_1^1]+\cdots +[b_{k_1}^1], \cdots, [b_1^m]+ \cdots + [b_{k_m}^m] \rangle \subset H_1(S)
\end{align*}
Let $Q$ be a set containing one point from each boundary component of $S$. Define $H_1^P(S)$ to be equal to the image of the following 
submodule of $H_1(S,Q)$ in $H_1(S,Q)/\partial H_1^P(S)$:
\begin{align*}
	\langle \{ [h] \in H_1(S,Q) | \ &\textrm{either $h$ is a simple closed curve or $h$ is a properly}\\
	& \textrm{embedded arc from $q_1$ to $q_2$ with $q_1, q_2 \in Q$ lying}\\
	& \textrm{in boundary components $b_1$ and $b_2$ with $\{b_1, b_2\} \subset P$}\\
	& \textrm{for some $p\in P$} \} \rangle.
\end{align*}

Putman proved that an element of $\Mod(S_g^n)$ belongs to $\mathcal{I}(S_g^n, P)$
if and only if it acts trivially on $H_1^P(S_g^n)$. In particular, this shows that $\mathcal{I}(S_g^n, P)$ is well-defined, not depending on the choice of the capping $\iota$.

%
%
\medskip
\section{Estimates on the cohomological dimension} \label{sec:cohomid}

We begin by a brief discussion on the arguments used in \cite{BestvinaBuxMargalit10} for computing the cohomological dimensions of a surface with only one boundary, as the ideas used for the general case are somewhat similar. The authors in \cite{BestvinaBuxMargalit10} first prove that $\cd(\I(S_g)) = 3g -5$.
Then, the Birman exact sequence

\begin{center}
\begin{tikzcd}
1 \arrow[r] & \pi_1(US_g) \arrow[r] & \I(S_g^1) \arrow[r] & \I(S_g) \arrow[r] & 1
\end{tikzcd}
\end{center}

immediately yields that $\cd(\I(S_g^1)) \leq 3(g+1) -5$. Indeed, $US_g$ is a closed 3-manifold, and thus has cohomological dimension $3$. We thus get the desired inequality by subadditivity of cohomological dimensions. 

The inequality in the other direction uses Poincare duality groups and was first proven in \cite{Mess90}. It was also shown in \cite{BestvinaBuxMargalit10} that $\I(S_g)$ contains a Poincare duality group $\Gamma$ of cohomological dimension $3g -5$. Restricting the Birman exact sequence to $\Gamma$, we get

\begin{center}
\begin{tikzcd}
1 \arrow[r] & \pi_1(US_g) \arrow[r] & \Gamma' \arrow[r] & \Gamma \arrow[r] & 1
\end{tikzcd}
\end{center}
where $\Gamma'$ is the preimage of $\Gamma$ under the capping homomorphism. By proposition \ref{manifold}, $\pi_1(US_g)$ also is a Poincare duality group. Hence, using proposition \ref{Poincare}, we get that $\cd(\Gamma') = 3g - 5 +3 = 3(g+1) -5$. Since $\Gamma'$ is a subgroup of $\I(S_g^1)$, we deduce that $\cd(\I(S_g^1)) \geq 3(g+1)-5$. Therefore, this proves that $\cd(\I(S_g^1)) = 3(g+1) -5$.

Since here $P = P^1$ and $ = |P| = |P^1| = n = 1$, both the lower and upper bound of theorem $\ref{main}$ are equal to $3g -4 +2 = 3(g+1) -5$, concluding the case $n=1$. 
\subsection{The upper bound}

Let $S_g^n$ be a surface of genus $g$ with $n$ boundary components and consider $S_g$ to be obtained from $S_g^n$ by gluing a disk to each boundary. We then have a map $\capd_n \colon \Mod(S_g^n) \to \Mod(S_g)$ obtained by extending mapping classes in $\Mod(S_g^n)$ by the identity on each of the glued disks. 

We would like to understand the kernel of $\capd_n$ as well as the kernel of the restriction of $\capd_n$ to $\I(S_g^n,P)$. The case of the map $\capd \colon \Mod(S_g^n) \to \Mod(S_g^{n-1})$ is already well understood. Indeed, the Birman exact sequence shows that the kernel of $\capd$ is isomorphic to the fundamental group of the unitary tangent bundle of $S_g^{n-1}$ and the kernel of the restriction of $\capd$ to $\I(S_g^n,P)$ was investigated in \cite{putman2007cutting}. We carry a similar investigation for $\capd_n$ here.

To understand the kernel of $\capd_n$, we need to introduce the notions of configurations spaces and framed configuration spaces and their fundamental groups.

The $n$ dimensional configuration space of a space $X$, denoted $\CG_n(X)$, is the set $\{(p_1,\cdots,p_n) \in X^n \mid p_i \neq p_j \forall i \neq j \}$. In other words, $\CG_n(X)$ is the set of all $n$-tuples of distinct points in $X^n$. The pure braid group on $n$ strands over $X$, denoted $PB_n(X)$ is defined to be the fundamental group of $\CG_n(X)$.

We define the framed configuration space of a space $X$, denoted by $\FCG(X)$, to be the set $\{(p,v_1, \cdots ,v_n) \in CG(S_g) \times S_1^n\}$. We think of it as the space of embeddings of $n$ distinct points into $X$, with a unit tangent vector at each point. The pure framed braid group on $n$ strands over $X$, denoted by $\PFB_n(X)$, is the fundamental group of $\FCG(X)$.

Armed with these new concepts, we can describe the kernel of $\capd_n \colon$
\newline  $\Mod(S_g^n) \to \Mod(S_g)$.
We note that this is proven in \cite{BellingeriGervais12}, but we recall the proof here for completeness.

\begin{thm}
The kernel of $\capd_n \colon \Mod(S_g^n) \to \Mod(S_g)$ can be identified with $\PFB_n(S_g)$. Hence, the following sequence is exact
$$
1 \to \PFB_n(S_g) \to \Mod(S_g^n) \to \Mod(S_g) \to 1
$$
\end{thm}
\begin{proof}
We consider $S_g$ to be obtained from $S_g^n$ by gluing a disk on each boundary component and we let $p_1, \cdots, p_n$ be $n$ points, one in each disk we glued. We also choose a unit tangent vector $v_i$ at each $p_i$.
Let $\Diffeo^+(S_g)$ be the group of orientation preserving diffeomorphisms of $S_g$ and
define $\psi \colon \Diffeo^+(S_g) \to \FCG(S_g)$ by $\psi(f) = (f(p_1), \cdots, f(p_n), f_*(v_1) ,\cdots, f_*(v_n))$. Then $\psi$ is a fiber bundle with fiber the group of diffeomorphisms of $S_g$ fixing $\{p_1, \cdots, p_n\}$ pointwise and fixing a unit tangent vector at each of these points. Thus, these diffeomorphisms fix the regular neighborhoods of each of the $p_i$'s. We can thus identify the fiber with $\Diffeo^+(S_g^n)$. We then get the desired result by looking at the long exact sequence of homotopy groups associated to this fiber bundle.
\end{proof}

\begin{remark}
It is shown in \cite{BellingeriGervais12} that $\PFB_n(S_g)$ is a semidirect product of $\PB_n(S_g)$ with $\mathbb{Z}^n$.
\end{remark}

We now investigate what the kernel of the restriction of $\capd_n$ to $\I(S_g^n,P)$. Let us define $P^1 = \{p \in P \mid |p| = 1\}$ and $P^2 = \{p \in P \mid |p| = 2\}$.

\begin{thm}
The kernel $K$ of $\capd_n \colon \I(S_g^n,P) \to \I(S_g)$ fits in a short exact sequence
$$
1 \to \mathbb{Z}^m \to K \to K'\to 1
$$
where $K'$ is $\pi(K)$ and $m = |P^1| + |P^2|$. 
\end{thm}

\begin{proof}
We have a short exact sequence
$$
1 \to \mathbb{Z}^n \to \PFB_n(S_g) \to \PBG_n(S_g) \to 1
$$
and $K$ is a subgroup of $\PFB_n(S_g)$. Hence, we want to understand the rank of $\mathbb{Z}^n \cap K$. The generators of $\mathbb{Z}^n$ are represented, as elements of $\Mod(S_g^n)$, by the Dehn twists around the boundary components of $S_g^n$. We thus only need to check which compositions of these Dehn twists are in $\I(S_g^n,P)$. Clearly, for any $p \in P$ of cardinality $1$, the corresponding Dehn twist is in $\I(S_g^n,P)$ since it is a $P$-separating twist. Also, for any $ p = \{ b_i,b_j\}$, we have that $T_{b_i}T_{b_j}^{-1}$ also is in $\I(S_g^n,P)$ since it is a $P$-bounding pair. We now show that the only elements in $\mathbb{Z}^n \cap K$ are words in these $P$-separating twists and in these $P$-bounding pairs.

For proving that no other composition of such Dehn twist is in $\I(S_g^n,P)$, we will look at their action on some arcs, which are elements of $H_1^P(S_g^n)$. Suppose that $f$ is a product of powers of Dehn twists around the boundaries of $S_g^n$. If $T_{b_1}^{k_1}T_{b_2}^{k_2}$ appears in $f$ with $k_1 + k_2 \neq 0$ for some $\{b_1,b_2\} \in P$, then we choose any arc $a$ from $b_1$ to $b_2$ and note that $f_*([a]) = T_{b_1}^{k_1}T_{b_2}^{k_2}([a]) \neq [a]$.
Similarly, if $T_b^k$ appears in $f$ for some $b \in p \in P$ with $|p| \geq 3$, then choosing any $b' \neq b \in p$ and any arc $a$ from $b$ to $b'$, we get that $f_*([a]) = T_b^k T_{b'}^{k'}([a]) = [a] + k'[b'] +k[b] \neq [a]$, where the last inequality comes from the fact that $[b]$ and $[b']$ are independent in $H_1^P(S_g^n)$.

\end{proof}


We thus have a short exact sequence 

$$ 1 \to K \to \I(S_g^n,P) \to \I(S_g) \to 1 $$

with $K$ being an extension of a subgroup of $\PBG_n(S_g)$ by $\mathbb{Z}^m$. In \cite{GonccalvesGuaschiMaldonaldo18}, the authors show that $\cd(\PBG_n(S_g)) = n+1$. Hence, we obtain that $\cd(K) \leq n+1+m = n+1+(|P^1| + |P^2|)$. This proves that $\cd(\I(S_g^n,P)) \leq 3g -5 + n + 1 + (|P^1| + |P^2|) = 3g -4 + n +(|P^1| + |P^2|)$, as desired. 

\subsection{The lower bound}

We will prove the lower bound by induction on $n$. To be precise, the induction hypothesis is that, for all partitioned surfaces with $n-1$ boundaries, there exists a Poincare duality group $\Gamma \subset \I(S_g^{n-1},P')$ such that $\cd(\Gamma) = 3g -4 + 3|P'| -|P'^1|$. Recall that we showed this to hold for $n =1$.

Let $S_g^n$ be a surface of genus $g$ with $n$ boundaries and let $P$ be a partition of the boundaries. 

Suppose that there exist two boundaries $b$ and $b'$ that are both capped on their own (i.e: both $\{b\}$ and $\{b'\}$ are elements of $P$). Let us embed $S_g^{n-1}$ in $S_g^n$ as suggested by the left part of figure \ref{lower bound 1}. More precisely, the blue curve in figure \ref{lower bound 1} cut $S_g^{n}$ into to surfaces $S_1$ and $S_2 = S_0^3$. We then identify $S_g^{n-1}$ with $S_1$. Let us also number the boundaries of $S_g^n$ so that $b = \delta_1$ and $b' = \delta_2$. We consider the capping $P'$ of $S_g^{n-1}$ induced by $P$. This implies that $\I(S_g^{n-1},P')$ is a subgroup of $\I(S_g^n,P)$ (see \cite{putman2007cutting}). By induction, there exists a Poincare duality group $\Gamma$ of dimension $3g -4 + 3|P'| -|P'^1|$ in $\I(S_g^{n-1},P')$.

Since $|P'| = |P| -1$, we need to add 2 dimensions to achieve the wanted lower bound. Indeed $3|P'| = 3|P| -3$ and $|P^1| =|P'^1|+1$. This is done by considering the group $G = \langle \Gamma, T_{\delta_1}, T_{\delta_2} \rangle$. This group $G$ is clearly a subgroup of $\I(S_g^n,P)$ and it is easy to show that $ G \equiv \Z^2 \bigoplus \Gamma$. This can be done, for example, by considering the action of $G$ on the arc $\alpha$, drawn in figure \ref{lower bound 1}. Hence, since $\Z^2$ is a Poincare duality group, proposition \ref{Poincare} shows that $\cd(G) = \cd(\Gamma) + 2$ and hence, by monotonicity of cohomological dimensions, that $\cd(S_g^n,P) \geq \cd(G) = 3g -4 +3(|P|-1) - | P^1 |)$ as desired.

Suppose instead that there is a pair of boundaries $b,b'$ such that $\{b,b'\} \in P$. We embed $S_g^{n-1}$ into $S_g^n$ the same way as before (see the right part of figure \ref{lower bound 1})and let $P'$ be the induced capping. Once again, we order the boundaries of $S_g^n$ such that $b = \delta_1$ and $b' = \delta_2$. This time, we only need to add one dimension, since $3|P| = 3|P'|$ and $|P^1| =|P'^1| +1$. Therefore, it is sufficient to add the bounding pair $T_{\delta_1} T_{\delta_2} ^{-1}$ to $\Gamma$, the Poincare duality group of dimension $3g-4 +3|P'| -|P'^1|$ in $\I(S_g^{n-1},P')$. As before, the group $G = \langle \I(S_g^{n-1}), T_{\delta_1}T_{\delta_2}^{-1} \rangle$ is isomorphic to $\Z \oplus \I(S_g^{n-1})$, and thus has one more cohomological dimension than $\I(S_g^{n-1})$, concluding the proof.


\begin{figure}
    \centering
    \includegraphics[scale = 0.4]{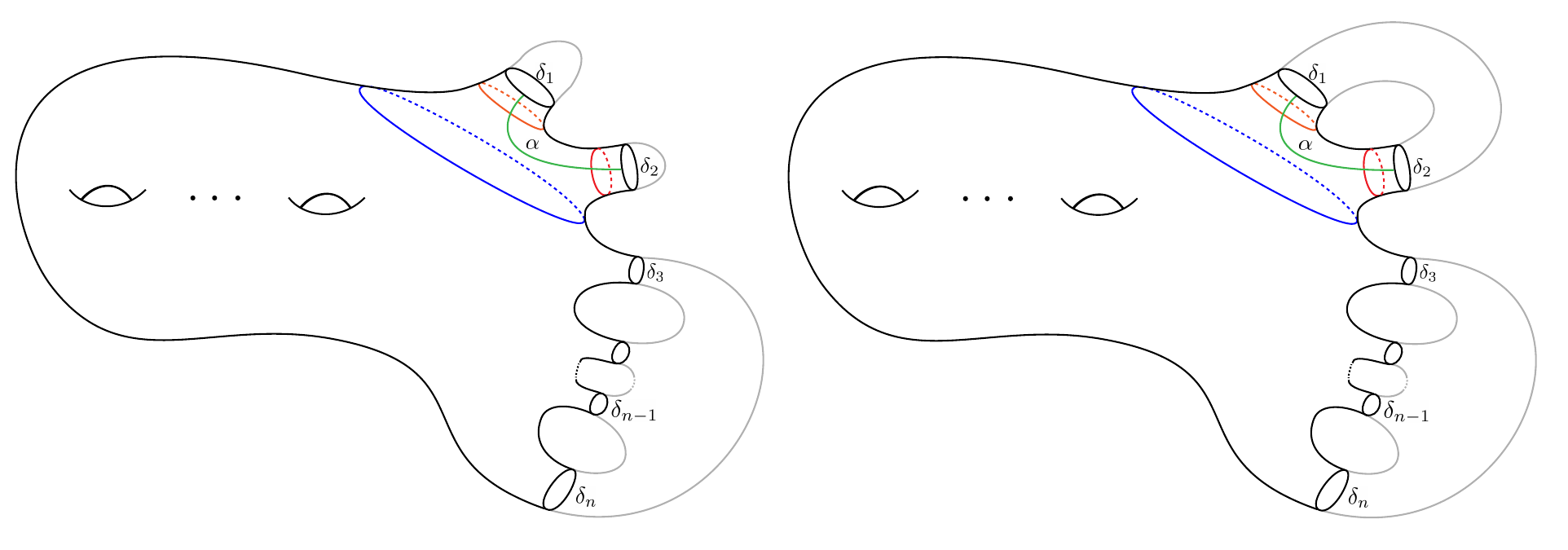}
    \caption{$S_g^n$ is the surface delimited by the black line and the cappings are suggested by the shaded lines. The blue curve indicates how we embed $S_g^{n-1}$ into $S_g^n. $}
    \label{lower bound 1}
\end{figure}

Putting those two cases together, we can show that theorem \ref{main} holds whenever each element of the partition $P$ has cardinality $1$ or $2$.

In other words, we proved that each $p \in P$ adds exactly $2$ dimensions if $p = \{b\}$ and that it adds exactly $3$ dimensions if $|p| =2$. It remains to show that $p$ adds at least $3$ dimensions if $p$ contains $2$ or more elements. 

The proof is almost identical to the case of $|p| = 2$. We again embed $S_g^{n-1}$ into $S_g^n$ as suggested by figure \ref{lower lower bound}. Let $\gamma$ be a simple closed curve cutting of all boundaries in $p$ except for $\delta_1$ (see figure \ref{lower lower bound}). We then consider the group $G = \langle \Gamma , T_{\delta_1} T_\gamma ^{-1}\rangle$ where $\Gamma$ is the Poincare duality group of the right dimension in $\I(S_g^{n-1},P')$. Once again, $G \equiv \Z \oplus \I(S_g^{n-1})$ and this shows that $\cd(\I(S_g^n,P)) \geq 1 +\cd(\I(S_g^{n-1},P'))$.

\begin{figure}
    \centering
    \includegraphics[scale = 0.5]{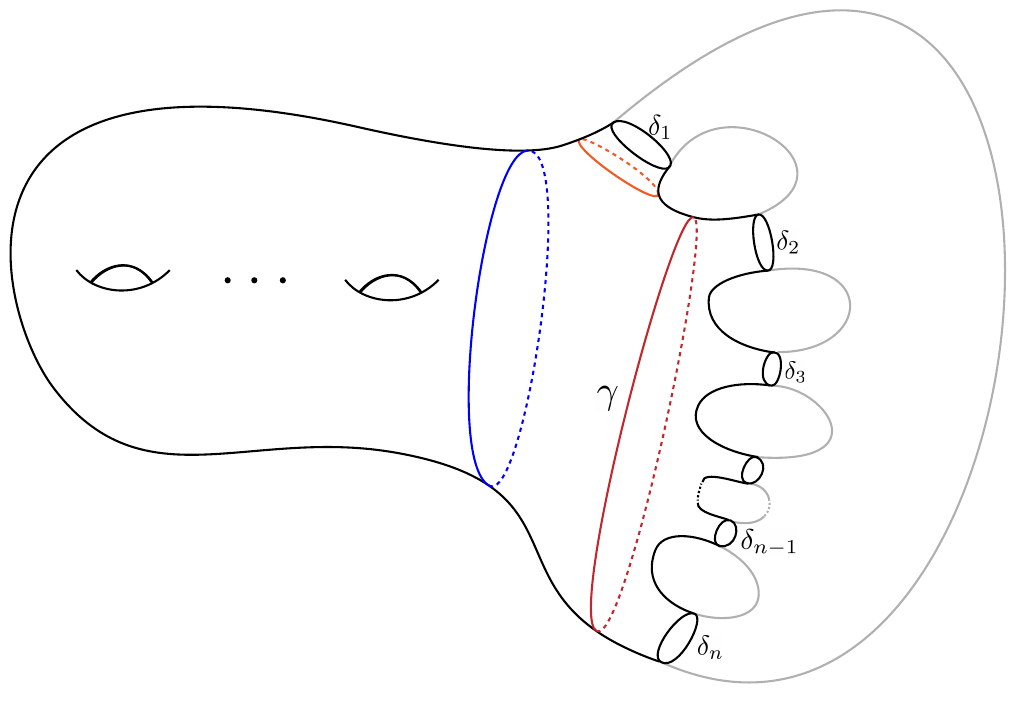}
    \caption{$S_g^n$ is the surface delimited by the black line and one capping is suggested by the shaded lines. The blue curve indicates how we embed $S$ into $S_g^n. $}
    \label{lower lower bound}
\end{figure}

%
%
\medskip
\section{Lower bound on the minimal asymptotic translation length} \label{sec:lowerboundlength}
Let $S$ be a surface of finite type with nonempty boundary and $\beta$ be one of its boundary components. 
Suppose $S'$ is the surface obtained from $S$ by capping the 
boundary component $\beta$ with a once-puncture disk. Let us denote this puncture by $p_0$.
Let $\Mod(S, \{ p_1, \cdots, p_k \})$ be the subgroup of $\Mod(S)$ consisting of
elements that fix the punctures $p_1, \cdots, p_k$ of $S$ (the set $\{p_i\}$ is possibly empty). 
Then we have a homomorphism 
$$\varphi: \Mod(S, \{ p_1, \cdots, p_k \})  \ra \Mod(S', \{ p_0, p_1, \cdots, p_k \})$$
and the following sequence is exact (see Proposition 3.19 in \cite{FarbMargalit12}):
$$ 1 \ra \langle T_{\beta} \rangle \ra \Mod(S, \{ p_1, \cdots, p_k \})  \xrightarrow{\varphi} \Mod(S', \{ p_0, p_1, \cdots, p_k \}) \ra 1.$$
As a consequence, we have the following proposition.

\begin{prop} \label{prop:intopmod} 
There is a homomorphism 
$$\Phi: \Mod(S_g^n) \ra \PMod(S_{g,n}),$$
by capping each boundary component with a once-punctured disk.
\end{prop}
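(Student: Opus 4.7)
The plan is to iterate the exact sequence displayed just before the proposition, capping one boundary component of $S_g^n$ at a time. Label the boundary components as $\beta_1, \ldots, \beta_n$ and form a sequence of surfaces $S_g^n = S^{(0)}, S^{(1)}, \ldots, S^{(n)} = S_{g,n}$, where $S^{(i)}$ is obtained from $S^{(i-1)}$ by gluing a once-punctured disk along $\beta_i$ and labelling the new puncture $p_i$.

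For each $i \in \{1,\ldots,n\}$, applying the exact sequence to the pair $(S^{(i-1)}, \beta_i)$, with the existing punctures $\{p_1, \ldots, p_{i-1}\}$ playing the role of the marked points, yields a surjective homomorphism
$$\varphi_i : \Mod(S^{(i-1)}, \{p_1, \ldots, p_{i-1}\}) \to \Mod(S^{(i)}, \{p_1, \ldots, p_i\}),$$
with kernel generated by the Dehn twist $T_{\beta_i}$. At stage $i = 1$ the domain is simply $\Mod(S_g^n)$, since no punctures exist yet; at stage $i = n$ the codomain is $\Mod(S_{g,n}, \{p_1, \ldots, p_n\})$, which is the pure mapping class group $\PMod(S_{g,n})$ by definition.

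Since the codomain of $\varphi_i$ coincides with the domain of $\varphi_{i+1}$, the composition
$$\Phi := \varphi_n \circ \cdots \circ \varphi_1 : \Mod(S_g^n) \to \PMod(S_{g,n})$$
is a well-defined group homomorphism, and it realizes the process of capping every boundary component of $S_g^n$ with a once-punctured disk. This is the desired map.

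There is no substantive obstacle here: the argument is a purely mechanical iteration of the short exact sequence recalled just above the proposition. The only point requiring care is checking that the domains and codomains line up at each step, which is immediate from the bookkeeping of which boundary components have been capped and which resulting punctures therefore need to be fixed.
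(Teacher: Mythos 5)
Your argument iterates the capping exact sequence one boundary component at a time, which is exactly the paper's approach (``applying Proposition 3.19 in \cite{FarbMargalit12} repeatedly''); you have simply spelled out the bookkeeping of domains and codomains more explicitly. The proof is correct and essentially identical to the paper's.
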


\begin{proof}
By applying Proposition 3.19 in \cite{FarbMargalit12} repeatedly, we obtain a 
homomorphism $\Phi: \Mod(S_g^n) \ra \Mod(S_{g,n})$.
Since the capping homomorphism fix the puncture, the image of $\Phi$ fixes all punctures of $S_{g,n}$.
Therefore, it is contained in $\PMod(S_{g,n})$. 
\end{proof}

Since $S_{g}^n \setminus \partial S_g^n$ is homeomorphic
to $S_{g,n}$ and essential curves are non-peripheral,
$\mathcal{C}(S_{g}^n)$ and $\mathcal{C}(S_{g,n})$ are
naturally identified, and the identification map is
$\Phi$-equivariant. 
This is because the subgroup of $\Mod(S_g^n)$ acting trivially on  $\mathcal{C}(S_{g}^n)$
is generated by the Dehn twists along peripheral curves, and this subgroup is contained in the kernel of $\Phi$. 
Abusing the notation, we will use $\mathcal{C}_{g,n}$ for this curve complex (up to identification).
By the $\Phi$-equivariance, for each $f \in \Mod(S_g^n)$, the actions of $f$ and $\Phi(f)$ on $\C_{g,n}$ are the same. From this, 
we immediately conclude that $\ell_{\C}(f) = \ell_{\C}(\Phi(f)).$

Before we obtain a lower bound on $L_{\C}(\I(S_g^n, P))$, we will prove a lemma which gives us a comparison 
between the groups $\I(S_g^n, P)$ for various partitions $P$ of the set of boundary components.  
Let $P, P'$ be partitions of the set of boundary components. We say $P'$ is finer than $P$ if each element of $P$ is either an element of $P'$ or a union of elements of $P'$. 

\begin{lem} \label{lem:comparison}
Let $P_1, P_2$ be partitions of the set of boundary components of $S_g^n$. Suppose $P_2$ is finer than $P_1$. Then we have $\I(S_g^n, P_1) \subset \I(S_g^n, P_2)$. 
\end{lem}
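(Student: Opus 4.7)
The plan is to bypass cappings entirely and use the homological characterization of Torelli groups recalled at the end of Section 2: a mapping class $f \in \Mod(S_g^n)$ lies in $\I(S_g^n, P)$ if and only if $f$ acts trivially on $H_1^P(S_g^n)$. Since both $\I(S_g^n, P_1)$ and $\I(S_g^n, P_2)$ are cut out by conditions on the same ambient module $H_1(S_g^n, Q)$ (once $Q$ is chosen), it suffices to compare the subquotients $H_1^{P_i}(S_g^n)$ and show that trivial action on the $P_1$-version forces trivial action on the $P_2$-version.

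The key observation is that, because $P_2$ refines $P_1$, each part of $P_1$ is a disjoint union of parts of $P_2$. This produces two containments inside $H_1(S_g^n, Q)$. First, every generator $[b_1^j]+\cdots+[b_{k_j}^j]$ of $\partial H_1^{P_1}(S_g^n)$ is the sum of the corresponding finer generators associated to the parts of $P_2$ that partition the given $P_1$-part, so $\partial H_1^{P_1}(S_g^n) \subset \partial H_1^{P_2}(S_g^n)$. Second, the submodule $M^{P_2}$ generated by simple closed curves together with arcs whose endpoints lie in boundaries of a single $P_2$-part is contained in the analogous $M^{P_1}$, because any such arc automatically has its endpoints in a single $P_1$-part.

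Given these two containments the argument closes in one line: if $f \in \I(S_g^n, P_1)$, then for every $m \in M^{P_1}$ we have $f(m) - m \in \partial H_1^{P_1}(S_g^n)$. Restricting to $m \in M^{P_2} \subset M^{P_1}$ and using $\partial H_1^{P_1}(S_g^n) \subset \partial H_1^{P_2}(S_g^n)$ yields $f(m) - m \in \partial H_1^{P_2}(S_g^n)$ for all $m \in M^{P_2}$. This is precisely the condition that $f$ acts trivially on $H_1^{P_2}(S_g^n)$, hence $f \in \I(S_g^n, P_2)$.

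The only point requiring a sentence of justification is that the action of $f$ really does descend: since $f$ fixes $\partial S_g^n$ pointwise it fixes the set $Q$, and it sends an admissible arc to an arc with the same ordered pair of endpoints, so it preserves both $M^{P_1}, M^{P_2}$ and both $\partial H_1^{P_i}(S_g^n)$. There is no genuine obstacle; the work is entirely in unpacking Putman's definition and noting that each piece of data behaves monotonically in the refinement order on partitions.
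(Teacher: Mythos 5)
Your proof is correct, and it takes a genuinely different route from the paper's. The paper argues through cappings: it fixes cappings $j_i\colon (S_g^n,P_i)\hookrightarrow S_i$ with planar caps, uses the definition $\I(S_g^n,P_i)=(j_i)_*^{-1}(\I(S_i))$, and then reasons that the coarser partition $P_1$ yields a closed surface $S_1$ with ``more'' simple closed curves than $S_2$, so that acting trivially on $H_1(S_1)$ forces trivial action on $H_1(S_2)$. Made precise, this amounts to observing that $S_1$ can be obtained from $S_2$ by adding $|P_2|-|P_1|$ handles inside the caps, giving a surjection $H_1(S_1)\twoheadrightarrow H_1(S_2)$ compatible with the $\Mod(S_g^n)$-actions; the paper leaves that geometric step implicit. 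You instead bypass cappings entirely and work with Putman's intrinsic characterization $f\in\I(S_g^n,P)\iff f$ acts trivially on $H_1^P(S_g^n)$, which the paper records but does not use here. The two containments you isolate are exactly right and both go the correct direction: refining the partition shrinks the allowed arcs, so $M^{P_2}\subset M^{P_1}$, and enlarges the boundary subgroup, so $\partial H_1^{P_1}(S_g^n)\subset\partial H_1^{P_2}(S_g^n)$; combining them gives the inclusion immediately. Your approach has the advantage of being fully algebraic and self-contained once Putman's characterization is granted, avoiding the surgery picture entirely; the paper's approach stays closer to the capping definition but relies on an unproved geometric heuristic. The one point you flag --- that $f$ preserves each $M^{P_i}$ and each $\partial H_1^{P_i}(S_g^n)$ because it fixes $\partial S_g^n$ pointwise --- is indeed the only thing to check, and your justification is correct.
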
 
\begin{proof}
Let $j_i: (S_g^n,P_i) \xhookrightarrow{} S_i$ be a capping for each $i = 1, 2$. We may assume that each connected component of $S_i \setminus j(S_g^n)$ has no genus. 
Then $\I(S_g^n, P_i) = (j_i)_*^{-1}(\I(S_i))$ by definition (here $S_i$ are closed surfaces so $\I(S_i)$ are usual Torelli groups). 

If $P_2$ is finer than $P_1$ then $S_1$ has ``more" simple closed curves than $S_2$. Since the primitive first homology classes of a surface can be realized by simple closed curves, 
this shows that $ (j_1)_*^{-1}(\I(S_1)) \subset (j_2)_*^{-1}(\I(S_2))$. This proves the lemma. 
\end{proof}

\begin{thm}
For any partitioned surface $(S_g^n,P)$, we have
$$L_{\C}(\I(S_g^n, P)) \gtrsim \frac{1}{|\chi(S_g^n)|}.$$
\end{thm}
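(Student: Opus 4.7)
My plan has three steps: reduce to the discrete partition, pass through the capping homomorphism to a closed surface, and invoke the Baik--Shin bound on $\I(S_g)$.

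For the reduction, let $P_{\max}$ be the discrete partition $\{\{b_1\},\ldots,\{b_n\}\}$. Since $P_{\max}$ is finer than any $P$, Lemma \ref{lem:comparison} gives $\I(S_g^n, P) \subset \I(S_g^n, P_{\max})$, and hence $L_{\C}(\I(S_g^n, P)) \geq L_{\C}(\I(S_g^n, P_{\max}))$. Thus it suffices to bound $L_{\C}(\I(S_g^n, P_{\max}))$ from below by $1/|\chi(S_g^n)|$.

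For the passage, I would use that for $P_{\max}$ the natural capping $\iota : S_g^n \hookrightarrow S_g$ fills each boundary component with a disk, and $\iota_*$ decomposes as $\iota_* = \psi \circ \Phi$, where $\Phi$ is the map of Proposition \ref{prop:intopmod} and $\psi : \PMod(S_{g,n}) \to \Mod(S_g)$ fills in the punctures. By Putman's definition of the Torelli group of a partitioned surface, $f \in \I(S_g^n, P_{\max})$ if and only if $\iota_*(f) \in \I(S_g)$. Combining the $\Phi$-equivariance $\ell_{\C}(f) = \ell_{\C}(\Phi(f))$ with the fact that puncture-filling induces a coarsely $1$-Lipschitz map $\C_{g,n} \to \C(S_g)$, for any pseudo-Anosov $f \in \I(S_g^n, P_{\max})$ such that $\iota_*(f)$ is also pseudo-Anosov on $S_g$ I obtain
$$\ell_{\C}(f) \;\geq\; \ell_{\C(S_g)}(\iota_*(f)) \;\geq\; L_{\C}(\I(S_g)) \;\gtrsim\; \frac{1}{g} \;\geq\; \frac{1}{|\chi(S_g^n)|},$$
where the last inequality uses $g \leq 2g-2+n = |\chi(S_g^n)|$.

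The main obstacle is the case where $\iota_*(f)$ is \emph{not} pseudo-Anosov. Since $\iota_*(f) \in \I(S_g)$ and the Torelli group of a closed surface is torsion-free, $\iota_*(f)$ is then either trivial or reducible. If $\iota_*(f) = \mathrm{id}$, then $f \in \ker(\iota_*)$, a group generated by boundary Dehn twists together with boundary-pushing maps (pre-images under $\Phi$ of point-pushes in $\PMod(S_{g,n})$); a pseudo-Anosov $f$ of this form must be treated directly, for instance via the partitioned homology $H_1^{P_{\max}}$ and an FLM-style dilatation bound. If $\iota_*(f)$ is reducible, one analyzes its pseudo-Anosov components on subsurfaces of strictly smaller Euler characteristic and applies the bound inductively. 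A cleaner uniform treatment would adapt the BS18 lower-bound argument directly to $\psi^{-1}(\I(S_g)) \supset \Phi(\I(S_g^n, P_{\max}))$, handling both sub-cases at once.
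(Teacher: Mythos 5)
Your first two steps (reduce to $P_{\max}$ via Lemma~\ref{lem:comparison}, then factor the capping $\iota_* = h_* \circ \Phi$) match the paper exactly. But the core of your argument departs from the paper's, and the departure is where the gap lives.

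You try to push the translation-length bound \emph{through} the puncture-filling map $h_* : \PMod(S_{g,n}) \to \Mod(S_g)$, using a coarse $1$-Lipschitz comparison $\ell_{\C(S_g)}(\iota_*(f)) \leq \ell_{\C}(\Phi(f))$ and then quoting $L_{\C}(\I(S_g)) \gtrsim 1/g$. This only yields a nonzero lower bound when $\iota_*(f)$ is itself pseudo-Anosov; when $\iota_*(f)$ is trivial or reducible one has $\ell_{\C(S_g)}(\iota_*(f)) = 0$ and the chain of inequalities gives nothing. You acknowledge this obstacle, but your proposed workarounds are not arguments: an ``FLM-style dilatation bound'' controls $\log\lambda$, not $\ell_{\C}$, and these two quantities scale differently in $g$; the inductive reduction to subsurfaces for the reducible case is not set up (there is no statement you can induct on, since the subsurfaces need not be of the form $S_{g'}^{n'}$ with a Torelli constraint); and ``adapt the BS18 lower-bound argument directly'' is precisely the content you would need to supply. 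So as written the proposal does not prove the theorem.

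The paper sidesteps all of this. It never compares translation lengths on $\C(S_g)$ at all. Instead it uses only the homological consequence of $f \in i_*^{-1}(\I(S_g))$: the induced map on $H_1(S_g)$ is the identity, so the Lefschetz number $L(h_*(\Phi(f))) = 2 - 2g < 0$. By Tsai's result, a negative Lefschetz number forces the Markov partition of the pseudo-Anosov $\Phi(f)$ on $S_{g,n}$ to have a rectangle meeting its own image, and the Gadre--Tsai nesting argument then gives $\ell_{\C}(\Phi(f)) \gtrsim 1/|\chi(S_{g,n})|$ directly. Crucially, $\Phi(f)$ is always pseudo-Anosov on $S_{g,n}$ (because $f$ is and the curve-graph actions of $f$ and $\Phi(f)$ agree), so there is no case split. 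This is the idea you are missing: replace ``$\iota_*(f)$ has large translation length on $\C(S_g)$'' (which may fail) with ``$\iota_*(f)$ has negative Lefschetz number'' (which always holds for Torelli elements), and feed the latter into the Markov-partition machinery on $S_{g,n}$.
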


\begin{proof} 
Let $f \in \I(S_g^n, P)$. 
By Lemma \ref{lem:comparison}, this implies that $f \in \I(S_g^n, P_{max})$ (as pointed out in \cite{putman2007cutting}, the idea of $\I(S_g^n, P_{max})$ appears in \cite{hain1995torelli}). 
We have a capping $i: (S_g^n, P_{max}) \to S_g$, hence $f \in i_*^{-1}(\I(S_g))$. 

Let $\Phi$ be the map in Proposition \ref{prop:intopmod}. 
Once we identify $S_{g,n}$ with the surface obtained from $S_g$ by removing one point from each connected component of $S_g \setminus i(S_g^n)$, 
$i$ can be seen as the composition $S_g^n \xhookrightarrow{j} S_{g,n} \xhookrightarrow{h} S_g$. 
One can consider a map $j_*: \Mod(S_g^n) \to \Mod(S_{g,n})$ by extending the map as identity outside the image under $j$, and then it would be the same map as $\Phi$.
Therefore the image under $j_*$ is contained in $\PMod(S_{g,n})$. Similarly one can define a map $h_*: \PMod(S_{g,n}) \to \Mod(S_g)$. 

Note that $j_*(f) = \Phi(f) \in \PMod(S_{g,n})$ and $h_*(\Phi(f)) = h_*(j_*(f)) = i_*(f)$. 
Since $f \in i_*^{-1}(\I(S_g))$, this implies that the Lefschetz number $L( \Phi(f) ) := L(h_*(\Phi(f))) = 2- 2g < 0$. 
By Tsai \cite{Tsai09}, $L(\Phi(f))<0$ implies that there exists a rectangle in the Markov partition for $\Phi(f)$ which intersects with its image under $\Phi(f)$.

Finally by \cite{GadreTsai11} (see also \cite{BaikShin18}),
$\ell_C(\Phi(f)) \gtrsim \frac{1}{|\chi(S_{g,n})|}$ and this implies $\ell_C(f) \gtrsim \frac{1}{|\chi(S_g^n)|}$ (recall the discussion right before Lemma \ref{lem:comparison}). 
This completes the proof.
\end{proof}

%
%
\medskip
\section{Upper bound on the minimal asymptotic translation length} \label{sec:upperboundlength}

In this section we obtain the upper bound on $L_{\C}(\I(S_g^n, P))$. We first focus on the case 
 $P = P_{\max}$. To do this, we use Penner's construction to find a pseudo-Anosov element $f \in \I(S_g^n,P_{\max})$ such that 
$ \ell_{\C}(f) \lesssim 1/|\chi(S_g^n)|$.



\begin{thm} \label{thm:maxpartitioncase}
We have
$$L_{\C}(\I(S_g^n, P_{\max})) \lesssim \frac{1}{|\chi(S_g^n)|}.$$
\end{thm}

\begin{proof} For notational simplicity, we simply write $P$ for $P_{\max}$ throughout the proof.  

\begin{figure}[t]	
\centering
\begin{tikzpicture}[scale=.6]		
		\foreach \z in {0,-6} {
		\draw[thick] (0,0) to [out=90, in=180] (7,2);
		\draw[thick] (14.5,0) to [out=90, in=0] (7,2);
		\draw[thick] (0,0) to [out=-90, in=180] (7,-2);
		\draw[thick] (14.5,0) to [out=-90, in=0] (7,-2);    
		\draw[thick] (0,0-6) to [out=90, in=180] (8,2-6);
		\draw[thick] (16.3,0-6) to [out=90, in=0] (8,2-6);
		\draw[thick] (0,0-6) to [out=-90, in=180] (8,-2-6);
		\draw[thick] (16.3,0-6) to [out=-90, in=0] (8,-2-6);  
        
		\node at (7, -2.8) {(a) $g=2+2m,$ even};
		\node at (7, -9) {(b) $g=3+2m$, odd};
		\node at (0.6, 2.2) {$S_g^n$};
				
		\node[blue] at (.7,-.5+\z) {\footnotesize $a_k$}; 
		\node[blue] at (2.75,-.7+\z) {\footnotesize $a_2$}; 
		\node[red] at (3.5,-.7+\z) {\footnotesize $a_1$}; 
		
		\node[red] at (6,-.75+\z) {\footnotesize $b_1$}; 
		\node[blue] at (7,-.75+\z) {\footnotesize $b_2$}; 
		\node[red] at (8.5,-.75+\z) {\footnotesize $b_l$}; 
		
		\node[red] at (11,-.7) {\footnotesize $a_{k+1}$}; \node[red] at (11,-.7-6) {\footnotesize $a_{k+1}$};
		\node[blue] at (13.2,-.7) {\footnotesize $a_{g-2}$};
		\node[red] at (13,-.9-6) {\footnotesize $a_{g-4}$};
		\node[blue] at (14.1,-1.2-6) {\footnotesize $a_{g-3}$};
		\draw[blue,->] (13.9,-1-6) to (13.9,-.45-6);
		\node[red] at (15.2,-.7-6) {\footnotesize $a_{g-2}$};
		\node[blue] at (14.1,.8-6) {\footnotesize $a_{g-1}$};
		
        \node[blue] at (4.8,-.9+\z) {\footnotesize $c_1$};
        \node[blue] at (9.8,-.9+\z) {\footnotesize $c_2$};
        \node[red] at (8,-7.5) {\footnotesize $c_3$};
        \node[red] at (13,-2.5) {\footnotesize $c_3$};\draw[red,->] (12.9,-2.3) to (12.3,-1.1);
        \node[red] at (14,-2.5) {\footnotesize $c_4$};\draw[red,->] (13.9,-2.3) to (13.4,-1.3);
        
		\node[blue] at (2.25,-.7+\z) {\footnotesize $\cdots$};
		\node[blue] at (7.7,-.75+\z) {\footnotesize $\cdots$};
		\node[blue] at (12,-.75+\z) {\footnotesize $\cdots$};

        \foreach \x in {1,3,4,10,11,13}{
        \draw[thick] (\x,-.1+\z) .. controls (.15+\x,0+\z) and (.35+\x,0+\z) .. (.5+\x,-.1+\z);
        \draw[thick] (-0.04+\x,-.09+\z) .. controls (.15+\x,-.2+\z) and (.35+\x,-.2+\z) .. (.54+\x,-.09+\z);
        }
        \foreach \x in {14,15}{
        \draw[thick] (\x,-.1-6) .. controls (.15+\x,0-6) and (.35+\x,0-6) .. (.5+\x,-.1-6);
        \draw[thick] (-0.04+\x,-.09-6) .. controls (.15+\x,-.2-6) and (.35+\x,-.2-6) .. (.54+\x,-.09-6);
        }
        
		\foreach \x in {5,6,7.5,8.5}{
		\draw[thick] (\x+.5,-.1+\z) circle [radius=.15];
		}        
		\draw[thick] (.9,-1.2+\z) circle [radius=.15];
		\node at (1.2, -1.4+\z) {\footnotesize $\beta$};
        
        \foreach \x in {2,7,12}{
        \node at (\x+0.4,-.1+\z) {$\cdots$};
        }
        
        \draw [red,semithick] plot [smooth cycle, tension=1] coordinates {(5.1,-.1+\z) (6,.3+\z) (6.9,-.1+\z) (6,-.5+\z)};
        \draw [blue,semithick] plot [smooth, tension=2] coordinates {(7,.3+\z) (6.1,-.1+\z) (7,-.5+\z)};
        \draw [blue,semithick] plot [smooth, tension=2] coordinates {(7.5,.3+\z) (8.4,-.1+\z) (7.5,-.5+\z)};
        \draw [red,semithick] plot [smooth cycle, tension=1] coordinates {(7.6,-.1+\z) (8.5,.3+\z) (9.4,-.1+\z) (8.5,-.5+\z)};
        
		\draw [blue,semithick] plot [smooth, tension=3.1] coordinates {(3.6,2.1+0) (4.8,-.65+0) (5.8,2.02+0)};
		\draw [blue,semithick,dotted] plot [smooth, tension=2.9] coordinates {(3.6,2.1+0) (4.8,-.6+0) (5.8,2.02+0)};
		
		\draw [blue,semithick] plot [smooth, tension=3.1] coordinates {(3.6+4.9,2.05+0) (4.8+4.9,-.65+0) (5.8+4.9,2.16+0)};
		\draw [blue,semithick,dotted] plot [smooth, tension=2.9] coordinates {(3.6+4.9,2.02+0) (4.8+4.9,-.6+0) (5.8+4.9,2.1+0)};
		
		\draw [blue,semithick] plot [smooth, tension=3.1] coordinates {(3.6,2.2-6) (4.8,-.65-6) (5.8,2.1-6)};
		\draw [blue,semithick,dotted] plot [smooth, tension=2.9] coordinates {(3.6,2.1-6) (4.8,-.6-6) (5.8,2.02-6)};
		
		\draw [blue,semithick] plot [smooth, tension=3.1] coordinates {(3.6+4.9,2.-6) (4.8+4.9,-.65-6) (5.8+4.9,2.16-6)};
		\draw [blue,semithick,dotted] plot [smooth, tension=2.9] coordinates {(3.6+4.9,2.02-6) (4.8+4.9,-.6-6) (5.8+4.9,2.1-6)};

        
        \foreach \y in {-1}{
        \foreach \x in {.7}{
        \draw[semithick,blue] (\x,.9+\y+\z) to [out=90,in=130] (1.3+\x,.94+\y+\z);
        \draw[semithick,blue] (\x,.9+\y+\z) to [out=-90,in=-130] (1.3+\x,.86+\y+\z);
        \draw[semithick,blue,dotted] (.1+\x,.9+\y+\z) to [out=90,in=140] (1.3+\x,.94+\y+\z);
        \draw[semithick,blue,dotted] (.1+\x,.9+\y+\z) to [out=-90,in=-140] (1.3+\x,.86+\y+\z);
		}}
		
		\foreach \y in {-1}{
        \foreach \x in {1.7}{
        \draw[semithick,blue] (.8+\x,1.2+\y+\z) to [out=-10,in=130] (1.3+\x,.94+\y+\z);
        \draw[semithick,blue] (.8+\x,.6+\y+\z) to [out=10,in=-130] (1.3+\x,.86+\y+\z);
		}}
		
		\foreach \y in {-1}{
        \foreach \x in {-13.75}{
        \draw[semithick,blue] (-\x,.9+\y) to [out=90,in=50] (-1.3-\x,.94+\y);
        \draw[semithick,blue] (-\x,.9+\y) to [out=-90,in=-50] (-1.3-\x,.86+\y);
        \draw[semithick,blue,dotted] (-.1-\x,.9+\y) to [out=90,in=40] (-1.3-\x,.94+\y);
        \draw[semithick,blue,dotted] (-.1-\x,.9+\y) to [out=-90,in=-40] (-1.3-\x,.86+\y);
		}}
		
		\foreach \y in {-1-6}{
        \foreach \x in {-14.75}{
        \draw[semithick,blue] (-\x,.9+\y) to [out=90,in=50] (-1.3-\x,.94+\y);
        \draw[semithick,blue] (-\x,.9+\y) to [out=-90,in=-50] (-1.3-\x,.86+\y);
        \draw[semithick,blue,dotted] (-.1-\x,.9+\y) to [out=90,in=40] (-1.3-\x,.94+\y);
        \draw[semithick,blue,dotted] (-.1-\x,.9+\y) to [out=-90,in=-40] (-1.3-\x,.86+\y);
		}}
		
		\foreach \y in {0}{
        \foreach \x in {-12.5}{
        \draw[semithick,red] (-\x,.25+\y) to [out=-10,in=10] (-\x,-.4+\y);
		}}        
		\foreach \y in {0}{
        \foreach \x in {-12.5}{
        \draw[semithick,blue] (-\x,.25+\y-6) to [out=-10,in=10] (-\x,-.4+\y-6);
		}}        
        
        \foreach \y in {-1}{
        \foreach \x in {13.7}{
        \draw[semithick,blue] (\x-.9,.9+\y-6) to [out=90,in=120] (1.3+\x,.94+\y-6);
        \draw[semithick,blue] (\x-.9,.9+\y-6) to [out=-90,in=-120] (1.3+\x,.86+\y-6);
        \draw[semithick,blue,dotted] (.1+\x-.9,.9+\y-6) to [out=90,in=130] (1.3+\x,.94+\y-6);
        \draw[semithick,blue,dotted] (.1+\x-.9,.9+\y-6) to [out=-90,in=-130] (1.3+\x,.86+\y-6);
		}}

		\foreach \y in {0}{
        \foreach \x in {2}{
        \draw[semithick,red] (\x,.25+\y+\z) to [out=190,in=170] (\x,-.4+\y+\z);
		}}        
        
        \foreach \y in {-1}{
        \foreach \x in {2.7}{
        \draw[semithick,red] (\x,.9+\y+\z) to [out=90,in=130] (1.3+\x,.94+\y+\z);
        \draw[semithick,red] (\x,.9+\y+\z) to [out=-90,in=-130] (1.3+\x,.86+\y+\z);
        \draw[semithick,red,dotted] (.1+\x,.9+\y+\z) to [out=90,in=140] (1.3+\x,.94+\y+\z);
        \draw[semithick,red,dotted] (.1+\x,.9+\y+\z) to [out=-90,in=-140] (1.3+\x,.86+\y+\z);
		}}
		
		\foreach \y in {-1}{
        \foreach \x in {-11.75}{
        \draw[semithick,red] (-\x,.9+\y+\z) to [out=90,in=50] (-1.3-\x,.94+\y+\z);
        \draw[semithick,red] (-\x,.9+\y+\z) to [out=-90,in=-50] (-1.3-\x,.86+\y+\z);
        \draw[semithick,red,dotted] (-.1-\x,.9+\y+\z) to [out=90,in=40] (-1.3-\x,.94+\y+\z);
        \draw[semithick,red,dotted] (-.1-\x,.9+\y+\z) to [out=-90,in=-40] (-1.3-\x,.86+\y+\z);
		}}
		\foreach \y in {-1}{
        \foreach \x in {-13.75,-15.75}{
        \draw[semithick,red] (-\x,.9+\y-6) to [out=90,in=50] (-1.3-\x,.94+\y-6);
        \draw[semithick,red] (-\x,.9+\y-6) to [out=-90,in=-50] (-1.3-\x,.86+\y-6);
        \draw[semithick,red,dotted] (-.1-\x,.9+\y-6) to [out=90,in=40] (-1.3-\x,.94+\y-6);
        \draw[semithick,red,dotted] (-.1-\x,.9+\y-6) to [out=-90,in=-40] (-1.3-\x,.86+\y-6);
		}}

		\foreach \y in {-1}{
        \foreach \x in {-12.75}{
        \draw[semithick,blue] (-.8-\x,1.2+\y+\z) to [out=180+10,in=50] (-1.3-\x,.94+\y+\z);
        \draw[semithick,blue] (-.8-\x,.6+\y+\z) to [out=170,in=-50] (-1.3-\x,.86+\y+\z);
		}}
		
		\draw [red,semithick] plot [smooth, tension=.5] coordinates {(13.1,-.04) (11,1) (3,1) (1.65,-.08) (3,-1.3) (11,-1.3) (13.1,-0.15)};
		\draw [red,semithick,dotted] plot [smooth, tension=.55] coordinates {(13.1,-.04) (11,1.05) (3,1.05) (1.6,-.08) (3,-1.35) (11,-1.35) (13.1,-0.15)};
		\draw [red,semithick] plot [smooth, tension=.5] coordinates {(1.2,-.04) (3,1.3) (12,1.3) (14,-0.08) (12,-1.6) (3,-1.6) (1.2,-0.15)};
		\draw [red,semithick,dotted] plot [smooth, tension=.55] coordinates {(1.2,-.04) (3,1.35) (12,1.35) (14.1,-0.08) (12,-1.65) (3,-1.65) (1.2,-0.15)};
		\draw [red,semithick] plot [smooth, tension=.5] coordinates {(1.2,-.04-6) (3,1.3-6) (14,1.3-6) (16,-0.08-6) (14,-1.6-6) (3,-1.6-6) (1.2,-0.15-6)};
		\draw [red,semithick,dotted] plot [smooth, tension=.55] coordinates {(1.2,-.04-6) (3,1.35-6) (14,1.35-6) (16.1,-0.08-6) (14,-1.65-6) (3,-1.65-6) (1.2,-0.15-6)};
        
        }

\end{tikzpicture}
\caption{Red curves and blue curves are two filling multi-curves of $S_g^n$, 
consisting of separating curves. If the number of boundary components $n$ is odd, 
$\beta$ is a boundary component. If $n$ is even, we consider $\beta$ capped by 
a disk and hence it is not a boundary component. The numbers $k$ and $l$ are 
defined by $\displaystyle k=m+\frac{1+(-1)^{m+1}}{2}$ and $\displaystyle l=n-1-\frac{1+(-1)^{n+1}}{2}$.}
\label{figure:upperbound}
\end{figure}
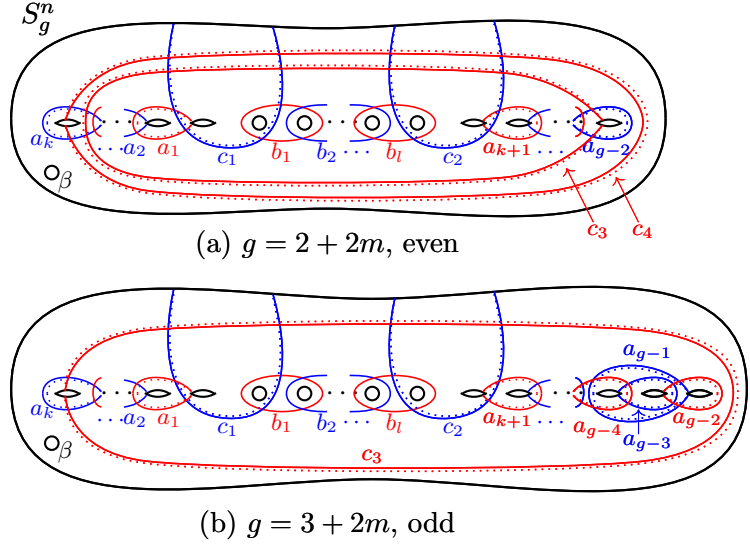

It suffices to show that there is a pseudo-Anosov mapping class $f \in \I(S_g^n, P)$ such that
$ \ell_{\C}(f) \lesssim 1/|\chi(S_g^n)|$.
The idea is as follows. We will find filling multicurves $A$ and $B$ consisting of separating curves.  
Then define $f = T_A T_B^{-1}$, where $T_A$ and $T_B$ are multi-twists. 
Then $f$ is pseudo-Anosov since it comes from Penner's construction.

Since $P$ is maximal, every separating curve on $S_g^n$ is still separating on the capped surface, hence the Dehn twist long a separating curve is indeed in $\I(S_g^n, P)$. This implies that $f$ lies in $\I(S_g^n, P)$. 

We then find a simple closed curve $\gamma$ such that $f^N(\gamma)$ and $\gamma$ 
don't fill the surface together for some integer $N$.
This implies that $d_{\C}(f^N(\delta), \delta) \leq 2$ and further that
$$\ell_{\C}(f) \leq \frac{2}{N}.$$
We will consider when genus $g$ is even and when $g$ is odd, separately.

\medskip
\textbf{Case 1.} Assume $g=2+2m$ for some $m$.
Let $A$ consist of red curves and $B$ consist of blue curves as in Figure \ref{figure:upperbound}(a).
In this figure, if the number of boundary components $n$ is odd, then we place $n-1$ boundary components 
in the middle and put the last boundary component, say $\beta$, on the left bottom corner.
If $n$ is even, we place all boundary components in the middle and $\beta$ doesn't represent the boundary component.
Then the numbers $k$ and $l$ are defined by 
$$ k=m+\frac{1+(-1)^{m+1}}{2} \quad \textrm{and} \quad  l=n-1-\frac{1+(-1)^{n+1}}{2}.$$
Note that $k$ is always even and $l$ is always odd. Define $f = T_A T_B^{-1}$.

\begin{figure}[t]	
\centering
\begin{tikzpicture}[scale=.7]		
		
		\foreach \x in {0,...,7}{
		\draw[thick] (\x,0) circle [radius=.15];
		} 
		
		\foreach \y in {0}{
        \foreach \x in {0,2,4,6}{
        \draw [red,semithick] plot [smooth cycle, tension=1] coordinates {(-.3+\x,0) (.5+\x,.5) (1.3+\x,0) (.5+\x,-.5)};
		}}   
		
		\foreach \y in {0}{
        \foreach \x in {1,3,5}{
        \draw [blue,semithick] plot [smooth cycle, tension=1] coordinates {(-.3+\x,0) (.5+\x,.5) (1.3+\x,0) (.5+\x,-.5)};
		}}
		
		\draw [blue,semithick] plot [smooth, tension=1] coordinates {(.5,1) (.2,-.3) (-.3,-.6)};
		\draw [blue,semithick] plot [smooth, tension=1] coordinates {(6.5,1) (6.8,-.3) (7.3, -.6)};
		
		\foreach \x in {3}{
		\draw [black!30!green,thick] plot [smooth cycle, tension=1] coordinates {(-.3+\x,0) (.5+\x,.5) (1.3+\x,0) (.5+\x,-.5)};
		}
		
		\node[black!30!green] at (3.5,.8) {\footnotesize $\delta$};
		\node[blue] at (3.5,-.8) {\footnotesize $b_{\lceil \frac{l}{2} \rceil}$};
		\node at (2,-.8) {\footnotesize $\cdots$};
		\node[red] at (.5, -.8) {\footnotesize $b_1$};
		\node at (3,-2) {(c) When $\lfloor n/2 \rfloor$ is even};
		
		\foreach \z in {4.5}{
		\foreach \x in {0,...,9}{
		\draw[thick] (\x,0-\z) circle [radius=.15];
		} 
		
		\foreach \y in {0}{
        \foreach \x in {0,2,4,6,8}{
        \draw [red,semithick] plot [smooth cycle, tension=1] coordinates {(-.3+\x,0-\z) (.5+\x,.5-\z) (1.3+\x,0-\z) (.5+\x,-.5-\z)};
		}}   
		
		\foreach \y in {0}{
        \foreach \x in {1,3,5,7}{
        \draw [blue,semithick] plot [smooth cycle, tension=1] coordinates {(-.3+\x,0-\z) (.5+\x,.5-\z) (1.3+\x,0-\z) (.5+\x,-.5-\z)};
		}}
		
		\draw [blue,semithick] plot [smooth, tension=1] coordinates {(.5,1-\z) (.2,-.3-\z) (-.3,-.6-\z)};
		\draw [blue,semithick] plot [smooth, tension=1] coordinates {(8.5,1-\z) (8.8,-.3-\z) (9.3, -.6-\z)};
		
		\foreach \x in {4}{
		\draw [black!30!green,thick] plot [smooth cycle, tension=1] coordinates {(-.3+\x,0-\z) (.5+\x,.5-\z) (1.3+\x,0-\z) (.5+\x,-.5-\z)};
		}
		
		\node[black!30!green] at (4.5,.8-\z) {\footnotesize $\delta$};
		\node[red] at (4.5,-.8-\z) {\footnotesize $b_{\lceil \frac{l}{2} \rceil}$};
		\node at (2.5,-.8-\z) {\footnotesize $\cdots$};
		\node[red] at (.5, -.8-\z) {\footnotesize $b_1$};
		\node at (3,-2-\z) {(d) When $\lfloor n/2 \rfloor$ is odd};
		 }


		 \foreach \z in {0,-4.5}{
		 
		  \draw[thick] (-2.8,\z-.1) circle [radius=.15];

        \foreach \x in {-4, -5, -6, -8}{
        \draw[thick] (\x,-.1+\z) .. controls (.15+\x,0+\z) and (.35+\x,0+\z) .. (.5+\x,-.1+\z);
        \draw[thick] (-0.04+\x,-.09+\z) .. controls (.15+\x,-.2+\z) and (.35+\x,-.2+\z) .. (.54+\x,-.09+\z);
        }
        
        \node at (-6.6, \z-.1) {$\cdots$};
        
        \foreach \y in {-1}{
        \foreach \x in {-5.3}{
        \draw[semithick,red] (\x,.9+\y+\z) to [out=90,in=130] (1.35+\x,.94+\y+\z);
        \draw[semithick,red] (\x,.9+\y+\z) to [out=-90,in=-130] (1.35+\x,.86+\y+\z);
        \draw[semithick,red,dotted] (.1+\x,.9+\y+\z) to [out=90,in=140] (1.35+\x,.94+\y+\z);
        \draw[semithick,red,dotted] (.1+\x,.9+\y+\z) to [out=-90,in=-140] (1.35+\x,.86+\y+\z);
		}}
		\foreach \y in {-1}{
        \foreach \x in {-6.3}{
        \draw[semithick,blue] (\x,.9+\y+\z) to [out=90,in=130] (1.35+\x,.94+\y+\z);
        \draw[semithick,blue] (\x,.9+\y+\z) to [out=-90,in=-130] (1.35+\x,.86+\y+\z);
        \draw[semithick,blue,dotted] (.1+\x,.9+\y+\z) to [out=90,in=140] (1.35+\x,.94+\y+\z);
        \draw[semithick,blue,dotted] (.1+\x,.9+\y+\z) to [out=-90,in=-140] (1.35+\x,.86+\y+\z);
		}}

		\draw [red,semithick] plot [smooth, tension=2] coordinates {(-1.9,.5+\z) (-3.2, -.2+\z) (-1.9, -.9+\z)};
		\draw [blue,semithick] plot [smooth, tension=2] coordinates {(-4.5,1+\z) (-3.2, -.6+\z) (-2, 1+\z)};
		
		\draw [black!30!green,thick] plot [smooth cycle, tension=1] coordinates {(-8.1,-.1+\z) (-7.7,.15+\z) (-7.3,-.1+\z) (-7.7,-.35+\z) };
		\node[black!30!green] at (-7.8,.5+\z) {\footnotesize $\gamma$};
		
		\node[red] at (-4.5, -.8+\z) {\footnotesize $a_1$};
		\node[blue] at (-5.5, -.8+\z) {\footnotesize $a_2$};
		\node[blue] at (-3.6, -.8+\z) {\footnotesize $c_1$};
		\node at (-6.5,-.8+\z) {\footnotesize $\cdots$};
		}
		
		\node[red] at (-7.5, -.8-4.5) {\footnotesize $a_{\frac{k}{2}}$};
		\node[blue] at (-7.5, -.8) {\footnotesize $a_{\frac{k}{2}}$};
				
		\foreach \z in {-4.5}{
		\foreach \y in {-1}{
        \foreach \x in {-8.3}{
        \draw[semithick,red] (\x,.9+\y+\z) to [out=90,in=130] (1.35+\x,.94+\y+\z);
        \draw[semithick,red] (\x,.9+\y+\z) to [out=-90,in=-130] (1.35+\x,.86+\y+\z);
        \draw[semithick,red,dotted] (.1+\x,.9+\y+\z) to [out=90,in=140] (1.35+\x,.94+\y+\z);
        \draw[semithick,red,dotted] (.1+\x,.9+\y+\z) to [out=-90,in=-140] (1.35+\x,.86+\y+\z);
		}}}
		\foreach \z in {0}{
		\foreach \y in {-1}{
        \foreach \x in {-8.3}{
        \draw[semithick,blue] (\x,.9+\y+\z) to [out=90,in=130] (1.35+\x,.94+\y+\z);
        \draw[semithick,blue] (\x,.9+\y+\z) to [out=-90,in=-130] (1.35+\x,.86+\y+\z);
        \draw[semithick,blue,dotted] (.1+\x,.9+\y+\z) to [out=90,in=140] (1.35+\x,.94+\y+\z);
        \draw[semithick,blue,dotted] (.1+\x,.9+\y+\z) to [out=-90,in=-140] (1.35+\x,.86+\y+\z);
		}}}
		
		\node at (-5,-2) {(a) When $k/2$ is even};
		\node at (-5,-2-4.5) {(b) When $k/2$ is odd};

\end{tikzpicture}
\caption{The choice of $\gamma$ and $\delta$} \label{figure:gammadelta}
\end{figure}
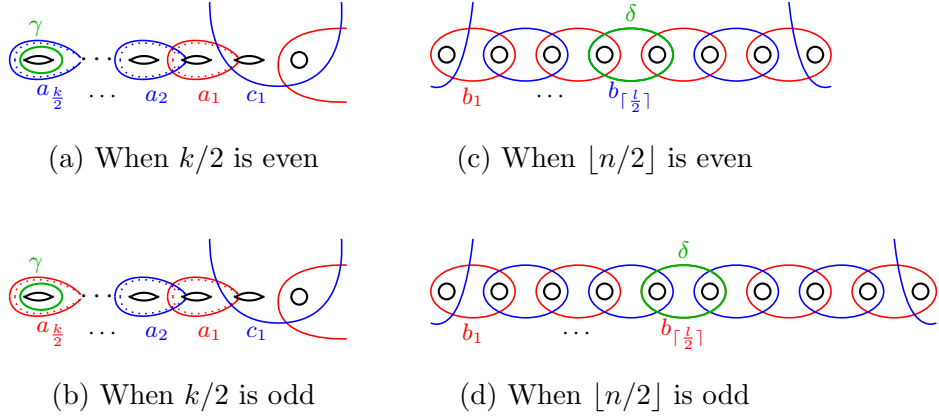

Now we follow the same notation as in the proof of Theorem 6.1 in \cite{GadreTsai11}.
For a finite collection of curves $\{c_j \}_{j=1}^{k}  \subset A \cup B$ such that
$c_1 \cup \cdots \cup c_k$ is connected, 
let $\NN(c_1 \cdots c_k)$ be the regular neighborhood of $c_1 \cup \cdots \cup c_k$.
Let $\delta$ be the curve $b_{\lceil \frac{l}{2} \rceil}$ and let $\gamma$ be the curve 
as in Figure \ref{figure:gammadelta}.
Recall that $f = T_A T_B^{-1}$ and consider the images of $f$.
One can see that
\begin{align*}
& f(\delta) \subset 
\left\{ \begin{array}{ll}
\NN(b_{\lceil \frac{l}{2} \rceil-1} b_{\lceil \frac{l}{2} \rceil} b_{\lceil \frac{l}{2} \rceil+1}), & \textrm{if $\lfloor \frac{n}{2} \rfloor$ is even,}\\
\NN(b_{\lceil \frac{l}{2} \rceil-2} b_{\lceil \frac{l}{2} \rceil-1} b_{\lceil \frac{l}{2} \rceil} b_{\lceil \frac{l}{2} \rceil+1} b_{\lceil \frac{l}{2} \rceil+2}), & \textrm{if $\lfloor \frac{n}{2} \rfloor$ is odd,}
\end{array} \right.
\\
\\
& f^{2}(\delta) \subset 
\left\{ \begin{array}{ll}
\NN(b_{\lceil \frac{l}{2} \rceil-3}  \cdots  b_{\lceil \frac{l}{2} \rceil+3}), & \hspace{5.2em} \textrm{if $\lfloor \frac{n}{2} \rfloor$ is even,}\\
\NN(b_{\lceil \frac{l}{2} \rceil-4}  \cdots  b_{\lceil \frac{l}{2} \rceil+4}), & \hspace{5.2em }\textrm{if $\lfloor \frac{n}{2} \rfloor$ is odd,}
\end{array} \right.
\\
&\vdots\\
& f^{\lfloor \frac{n}{4} \rfloor} ( \delta)  \subset \NN( b_1 \cdots b_l)\\
&\vdots\\
& f^{\lceil \frac{k}{4} \rceil + \lfloor \frac{n}{4} \rfloor} ( \delta)  \subset \NN( a_{t} \cdots a_{k+t}), \qquad \textrm{where $t=2 \Big\lceil \frac{k}{4} \Big\rceil -1$.}
\end{align*}
Note that $\gamma$ can be realized as not included $\NN( a_{t} \cdots a_{k+t})$ and hence 
$\gamma$ doesn't intersect $f^{\lceil \frac{k}{4} \rceil + \lfloor \frac{n}{4} \rfloor} ( \delta)$.
This implies that $f^{\lceil \frac{k}{4} \rceil + \lfloor \frac{n}{4} \rfloor} (\delta)$ and $\delta$ doesn't fill the surface,
i.e., $d_{\C}(f^{\lceil \frac{k}{4} \rceil + \lfloor \frac{n}{4} \rfloor}(\delta), \delta) \leq 2$. Hence we have
\begin{align*}
	\ell_{\C}(f) &\leq \frac{2}{\lceil \frac{k}{4} \rceil + \lfloor \frac{n}{4} \rfloor} \leq \frac{2}{\frac{k}{4}+\frac{n}{4}-1}\\
					&\leq \frac{8}{k+n-4} = \frac{8}{\frac{g-2}{2} + n -4}\\
					&\leq \frac{16}{g+2n-10} < \frac{16}{g+\frac{n}{2}-10}\\
					&= \frac{32}{|\chi(S_g^n)|-18}.
\end{align*}
Therefore, $\displaystyle \ell_{\C}(f) \lesssim \frac{1}{|\chi(S_g^n)|}.$

\vspace{.5em}
\textbf{Case 2.} Assume $g=3+2m$ for some $m$.

In this case, the computation is almost identical to Case 1.
Let $A$ consist of red curves and $B$ consist of blue curves as in Figure \ref{figure:upperbound}(b).
Here, the numbers $k$ and $l$ are defined by 
$$ k=m+\frac{1+(-1)^{m+1}}{2} \quad \textrm{and} \quad  l=n-1-\frac{1+(-1)^{n+1}}{2}.$$
Define $f = T_A T_B^{-1}$ and choose $\gamma$ and $\delta$ as in Figure \ref{figure:gammadelta}.
Then by the same argument as in Case 1, we conclude that $\gamma$ doesn't intersect $f^{\lceil \frac{k}{4} \rceil + \lfloor \frac{n}{4} \rfloor} ( \delta)$. 
Then $f^{\lceil \frac{k}{4} \rceil + \lfloor \frac{n}{4} \rfloor} (\delta)$ and $\delta$ doesn't fill the surface, and hence we have
again that
\begin{align*}
	\ell_{\C}(f) &\leq \frac{2}{\lceil \frac{k}{4} \rceil + \lfloor \frac{n}{4} \rfloor} \leq  \frac{32}{|\chi(S_g^n)|-18}.
\end{align*}
Therefore, $\displaystyle \ell_{\C}(f) \lesssim \frac{1}{|\chi(S_g^n)|}.$

\end{proof}

Now let $P$ be a general partition. We obtain a new surface $S'$ from $S_g^n$ as follows: for each element $c$ of $P$ which has more than one boundary components, we cap off each boundary components in $c$ by a disk for all but one boundary components of $c$. Then $S'$ has the same genus as $S$ while the number of boundary component may decrease. Let $P'$ be the induced partition of boundary of $S'$. By the construction, $P'$ is maximal. 

As we mentioned in Section \ref{sec:cohomid}, for any boundary $d$ of $S_g^n$, there is a map $\capd: \I(S_g^n, P) \to \I(S', \Tilde{P})$, where $\Tilde{P}$ is obtained from $P$ by deleting $d$ from the set $p \in P$ containing it. In \cite{putman2007cutting}, this map is proven to be surjective. We can thus obtain a surjective map  $\rho: \I(S_g^n, P) \to \I(S', P')$ by composition of maps of the form $\capd$, one for each of the boundary we want to cap off.
Since $S'$ has less simple closed curves than $S_g^n$, for each $f \in \I(S_g^n, P)$, $\ell(f) \leq \ell(\rho(f))$. This implies that $L_{\C}(\I(S_g^n, P)) \leq L_{\C}(\I(S', P'))$. On the other hand, since $P'$ is maximal, by Theorem \ref{thm:maxpartitioncase}, we have 
$L_{\C}(\I(S', P')) \leq C/|\chi(S')|$. Now observe that the boundary components of $S'$ are one-to-one correspondence with elements of $P$. Hence, $|\chi(S')| + (n-|P|) = |\chi(S_g^n)|$. The upper bound in Theorem \ref{thm:mainthmlength} follows.

%
%
\medskip
\section{Further discussions} \label{sec:future} 

One obvious improvement to theorem \ref{main} would be to find a precise formula for the cohomological dimension of $\I(S_g^n,P)$ for all $P$. In \cite{putman2007cutting}, the author characterizes precisely the kernel of the map $\capd \colon \I(S_g^n,P) \to \I(S_g^{n-1}, P')$. It is either the whole fundamental group of the unitary tangent bundle of $S_g^{n-1}$ when we cap a boundary $d$ such that $\{d\} \in P$, or a graph of the kernel of $\pi_1(S_g^{n-1}) \to H_1^{P'}(S_g^{n-1})$ in all other cases. It would be good to find a similar characterization of the kernel of $\capd_n \colon \I(S_g^n,P) \to \I(S_g)$ and that would possibly improve our upper bound for the cohomological dimension of $\I(S_g^n,P)$. Indeed, we showed that this kernel fits in a short exact sequence 

$$
1 \to \mathbb{Z}^m \to K \to K' \to 1
$$
and used the fact that $K'$ is a subgroup of $\PBG_n(S_g)$ and thus has cohomological dimension at most $k+1$. But a more careful analysis of $K'$ might yield a better estimation of its cohomological dimension.

Recall that the minimal asymptotic translation lengths for $\Mod(S_g)$ and $\Mod(\mathcal{I}_g)$ have different order. More precisely, 
for closed surfaces we have 
$L_{\C}(\Mod(S_g)) \asymp 1/g^2$ by Gadre-Tsai \cite{GadreTsai11} while 
$L_{\C}(\Mod(\mathcal{I}_g)) \asymp 1/g$ by Baik-Shin \cite{BaikShin18}.

One could ask if an analogous statement would be true for either $S_g^n$ or $S_{g,n}$. First of all, Valdivia showed in \cite{Valdivia14} (for (1), see also \cite{BaikShin18}) that 
\begin{enumerate}
	\item When $g$ is fixed, $L_{\C}(\PMod(S_{g,n})) \asymp 1/|\chi(S_{g,n})|$
	\item When $g = rn$ for some $r \in \mathbb{Q}$, $L_{\C}(\Mod(S_{g,n})) \asymp 1/|\chi(S_{g,n})|^2$. 
\end{enumerate} 

As we pointed out before, $L_{\C}(\Mod(S_g^n)) = L_{\C}(\PMod(S_{g,n}))$, since we have a map $\Phi: \Mod(S_g^n) \to  \PMod(S_{g,n})$ so that their actions on the curve complexes is $\Phi$-equivariant. Hence by (1) above, it is not possible to have $L_{\C}(\Mod(S_g^n)) \asymp 1/|\chi(S_g^n)|^2$. 

We conjecture that both phenomena are true in general. 

\begin{conj}\label{conj:punctured}
The followings hold. 
\begin{enumerate}
	\item $L_{\C}(\PMod(S_{g,n})) \asymp 1/|\chi(S_{g,n})|$.
	\item $L_{\C}(\Mod(S_{g,n})) \asymp 1/|\chi(S_{g,n})|^2$. 
\end{enumerate} 
\end{conj}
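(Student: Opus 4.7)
The plan is to exhibit an explicit pseudo-Anosov mapping class $f \in \I(S_g^n, P)$ produced by Penner's construction, and then bound $\ell_{\C}(f)$ from above by tracking iterates of a well-chosen curve in the spirit of Gadre--Tsai.

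First, I would construct two multicurves $A$ and $B$ on $S_g^n$ whose union fills the surface, where every component of $A$ and every component of $B$ is a \emph{separating} simple closed curve. The key observation is that a Dehn twist along a separating curve acts trivially on $H_1$ (it can be written as a product of commutators in the mapping class group of a subsurface, or more directly, the separating curve is null-homologous), so after any capping $\iota: S_g^n \hookrightarrow S_g$ the twists $T_a$ and $T_b$ land in $\I(S_g)$. Hence $f := T_A T_B^{-1}$ lies in $\I(S_g^n, P)$ for \emph{every} partition $P$. To build such filling separating multicurves on a surface with boundary, I would arrange chains of separating curves: some running across the genus handles in the usual "lantern-style" fashion, and some enclosing groups of boundary components. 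Once the two multicurves together fill $S_g^n$, Penner's criterion guarantees $f$ is pseudo-Anosov.

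Second, to estimate $\ell_{\C}(f)$, I would select a simple closed curve $\delta$ supported in the middle of the chain and a curve $\gamma$ supported at the far end. A straightforward induction shows that $f(\delta)$ lies in a regular neighborhood of $\delta$ together with the two adjacent multicurve components, and more generally $f^k(\delta) \subset \NN(c_{-k} \cup \cdots \cup c_k)$ for a suitable block of components of $A \cup B$. Thus after $N \asymp |\chi(S_g^n)|$ iterations $f^N(\delta)$ still fails to exhaust the chain, so it remains disjoint from $\gamma$. This forces $\delta$ and $f^N(\delta)$ to fail to fill, giving $d_{\C}(\delta, f^N(\delta)) \leq 2$ and hence $\ell_{\C}(f) \leq 2/N \lesssim 1/|\chi(S_g^n)|$.

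The main obstacle will be designing the chains of filling separating multicurves and verifying the support containment $f^k(\delta) \subset \NN(c_{-k} \cdots c_k)$ rigorously. Since the genus contributes pairs of handles and the boundary components must be enclosed in separating loops, one naturally has to split into cases based on the parity of $g$ and the parity of $n$, adjusting the lengths of the chains and the precise location of $\delta$ and $\gamma$ accordingly. The bookkeeping of indices (ensuring $f^{O(|\chi|)}(\delta)$ does not sweep across the entire chain to hit $\gamma$) is the delicate part, but once the picture is in place the arithmetic reduces to the linear estimate $\ell_{\C}(f) \leq 2/(\lceil k/4 \rceil + \lfloor n/4 \rfloor)$ with $k$ comparable to $g$, which yields the desired $\ell_{\C}(f) \lesssim 1/(g+n)$.
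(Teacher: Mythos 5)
The statement you were asked to address is Conjecture~\ref{conj:punctured}, which the paper records as an \emph{open} conjecture; there is no proof of it in the paper to compare against. What you have actually sketched is the proof of the upper-bound theorem in Section~4: you build $f = T_A T_B^{-1}$ from two filling multicurves of separating curves, note $f \in \I(S_g^n,P)$, track $f^k(\delta)$ through a chain of regular neighborhoods until some $f^N(\delta)$ misses a far-off curve $\gamma$, and conclude $\ell_{\C}(f) \leq 2/N \lesssim 1/|\chi(S_g^n)|$. That matches the paper's Theorem~4.1 argument essentially verbatim (same Penner construction, same $\NN(\cdot)$ bookkeeping, same parity casework, even the same final estimate $\ell_{\C}(f) \leq 2/(\lceil k/4 \rceil + \lfloor n/4 \rfloor)$), and via the $\Phi$-equivariant identification $\C(S_g^n) \cong \C(S_{g,n})$ it does supply the upper bound $L_{\C}(\PMod(S_{g,n})) \lesssim 1/|\chi(S_{g,n})|$, i.e.\ one half of Conjecture~\ref{conj:punctured}(1).

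However, the conjecture asks for considerably more, and your proposal has no machinery for the rest. The lower bound $L_{\C}(\PMod(S_{g,n})) \gtrsim 1/|\chi(S_{g,n})|$ in part~(1) is not attackable by exhibiting a single element; the paper establishes it only under the restriction $g < (1/4 - \epsilon)n$ (Theorem~5.2) using Bestvina--Handel invariant train tracks, a count of real branches against $9|\chi|$, and the Euler--Poincar\'e formula to force a uniform bound on the power $q$ that returns a branch to itself. Your proposal contains none of that. Part~(2) asks for $L_{\C}(\Mod(S_{g,n})) \asymp 1/|\chi(S_{g,n})|^2$, and your element cannot even witness the upper bound there: $f$ lies in the Torelli group, and by the paper's own lower-bound theorem every pseudo-Anosov in $\I(S_g^n,P)$ satisfies $\ell_{\C}(f) \gtrsim 1/|\chi|$, so no Penner element built from separating curves can reach the smaller order $1/|\chi|^2$. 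Achieving that order requires a pseudo-Anosov in $\Mod(S_{g,n})\setminus \PMod(S_{g,n})$ that genuinely permutes punctures, as in Valdivia's construction for $g = rn$. In short, your argument is a correct re-derivation of the paper's Theorem~4.1 but does not address, and structurally cannot address, the open content of Conjecture~\ref{conj:punctured}.
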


Valdivia's result is a partial evidence to both parts of Conjecture \ref{conj:punctured}. As an additional partial evidence to Conjecture \ref{conj:punctured} (1), we show the following. 

\begin{thm} Suppose $g < (1/4 - \epsilon) n$ for arbitrarily small $\epsilon > 0$. Then 
$L_{\C}(\PMod(S_{g,n})) \asymp 1/|\chi(S_{g,n})|$.
\end{thm}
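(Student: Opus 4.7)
Here is a proof plan. The argument has two parts.

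\textbf{Upper bound.} The inequality $L_{\C}(\PMod(S_{g,n})) \lesssim 1/|\chi(S_{g,n})|$ follows directly from the Main Theorem and does not use the hypothesis on $(g,n)$. Fix any partition $P$ of the boundary components of $S_g^n$. The Main Theorem supplies a pseudo-Anosov $\tilde f \in \I(S_g^n, P)$ with $\ell_{\C}(\tilde f) \lesssim 1/|\chi(S_g^n)|$. The map $\Phi\colon \Mod(S_g^n) \to \PMod(S_{g,n})$ from Proposition \ref{prop:intopmod} is equivariant for the natural identification $\C(S_g^n) \cong \C(S_{g,n})$, so $f := \Phi(\tilde f)$ lies in $\PMod(S_{g,n})$ with $\ell_{\C}(f) = \ell_{\C}(\tilde f)$; in particular $f$ is pseudo-Anosov since it has positive translation length on the curve graph. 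Since $|\chi(S_g^n)| = |\chi(S_{g,n})|$, the upper bound follows.

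\textbf{Lower bound.} For $L_{\C}(\PMod(S_{g,n})) \gtrsim 1/|\chi(S_{g,n})|$, I would adapt the Lefschetz-number strategy used in the lower bound of the Main Theorem. Let $f \in \PMod(S_{g,n})$ be pseudo-Anosov. Because $f$ fixes each puncture, $f_*$ acts as the identity on the $(n-1)$-dimensional subspace $K \subset H_1(S_{g,n},\Z)$ generated by loops around the punctures, and the induced action on $H_1(S_{g,n},\Z)/K \cong H_1(S_g,\Z)$ is an element $A \in \Sp(2g,\Z)$. Hence the Lefschetz number of $f$ is
$$L(f) = 2 - n - \Tr(A).$$
The goal is to show that, under the hypothesis $g < (1/4 - \epsilon)n$, there is a uniformly bounded power $k \leq K(\epsilon)$ with $L(f^k) < 0$. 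By Tsai's theorem this forces a self-intersecting rectangle in the Markov partition of $f^k$, and the sharpened Gadre--Tsai estimate then gives $\ell_{\C}(f^k) \gtrsim 1/|\chi(S_{g,n})|$, whence $\ell_{\C}(f) \gtrsim 1/(K(\epsilon)|\chi(S_{g,n})|)$ as desired.

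\textbf{Main obstacle.} The crucial step is the uniform bound on $k$. Since $\Tr(A)$ can be arbitrarily negative a priori for $A \in \Sp(2g,\Z)$, the inequality $\Tr(A^k) > 2 - n$ is not automatic from $n \gg g$. The natural analysis splits on the spectrum of $A$: if every eigenvalue of $A$ lies on the unit circle then $|\Tr(A^k)| \leq 2g$ for all $k$, so the weaker constraint $n > 2g + 2$ (implied by $g < (1/4-\epsilon)n$ for $g \geq 2$) already forces $L(f) < 0$ with $k = 1$. The harder case is when $A$ has an eigenvalue of modulus greater than one; there one would need to exploit the reciprocal pairing of symplectic eigenvalues together with a pigeonhole-type control of the signs of $\Tr(A^k)$. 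The extra slack in $g < (1/4-\epsilon)n$ compared to the weaker $g < n/2$ is what should make the resulting bound on $k$ depend only on $\epsilon$ and not on $g$ or $n$.
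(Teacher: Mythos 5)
Your upper bound is fine and is essentially equivalent to the paper's: the paper invokes Valdivia's construction for $\PMod(S_{g,n})$, whereas you push a pseudo-Anosov from the Main Theorem forward under $\Phi$; both rely on the $\Phi$-equivariant identification $\C(S_g^n)\cong\C(S_{g,n})$, which preserves $\ell_{\C}$ and hence preserves the property of being pseudo-Anosov. The hypothesis on $(g,n)$ is indeed not needed here.

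The lower bound is where the proposal has a genuine gap, and it is one that you yourself flag. The Lefschetz-number strategy that worked for the Torelli case exploited that $i_*(f)$ acts as the identity on $H_1(S_g)$, giving the \emph{fixed} value $L = 2 - 2g < 0$ with no powers needed. For general $\phi\in\PMod(S_{g,n})$, the induced $A\in\Sp(2g,\Z)$ is essentially arbitrary, so $\Tr(A)$ (and $\Tr(A^k)$ for any prescribed small $k$) can be driven far below $2-n$; the symplectic pairing of eigenvalues only gives $|\Tr(A)|\leq 2g$ when \emph{all} eigenvalues are unimodular, and once an eigenvalue of modulus $>1$ appears there is no short pigeonhole argument controlling the sign of $\Tr(A^k)$ for $k$ up to a bound depending only on $\epsilon$ (a complex eigenvalue pair with argument near $\pi$ can make several consecutive powers have negative trace of large magnitude). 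So the step ``there is $k\leq K(\epsilon)$ with $L(\phi^k)<0$'' is not established, and I don't see how to establish it along these lines.

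The paper sidesteps this entirely and uses a train-track argument instead. From Bestvina--Handel one gets an invariant train track $\tau$ whose number of real branches is at most $9|\chi(S_{g,n})|$, and Proposition 2.2 of \cite{BaikShin18} reduces the problem to finding a uniformly bounded $q$ with $M_{\mathcal{R}}^q$ having a positive diagonal entry. The hypothesis $g < (1/4-\epsilon)n$ enters through the Euler--Poincar\'{e} formula: writing $k_1,k_2$ for the number of punctures lying in complementary monogons and bigons of $\tau$, one gets $k_1 > (4\epsilon n - k_2 + 4)/2$. Splitting on whether $k_2 < 2\epsilon n$ or $k_2 \geq 2\epsilon n$, in either case a positive fraction of the punctures lie in monogons (or bigons), and since the total number of real branches is $O(n)$, some such monogon (or bigon) has at most $N = N(\epsilon)$ real branches attached. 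These real branches are permuted by $\phi$, giving $q \leq N(\epsilon)$. This is both where the hypothesis is used and what makes the bound uniform; your Lefschetz route would need an analogous uniformity that it does not currently deliver.
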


\begin{proof} We first remark that it is sufficient to show that
$L_{\C}(\PMod(S_{g,n})) \gtrsim 1/|\chi(S_{g,n})|$ due to the construction given in Section 4 of \cite{Valdivia14}. For this, we roughly follow the proof of Theorem 7.1 (see also the proof of Theorem 5.1) in \cite{BaikShin18}. 

Let $\phi \in \PMod(S_{g,n})$ be a pseudo-Anosov element and  
$\tau$ be its invariant train track obtained from Bestvina-Handel algorithm \cite{BestvinaHandel95}. What we need to know about the train track obtained from Bestvina-Handel algorithm is summarized in Section 2 of 
\cite{BaikShin18}. Here is the list of facts we need. 
\begin{enumerate}
	\item $\tau$ has two types of branches, real and infinitesimal. 
	\item The number of real branches is smaller or equals to $9 |\chi(S_{g,n})|$. 
	\item Let $M_\mathcal{R}$ be the transition matrix for $\phi$ on the real branches of $\tau$. If $q$ is a positive integer so that $M_{\mathcal{R}}^q$ has a positive diagonal entry, then the $\ell_\mathcal{C}(\phi) \gtrsim 1/ q|\chi(S_{g,n})|$ (this follows from Proposition 2.2 of \cite{BaikShin18} which is based on the work of \cite{MasurMinsky99}, \cite{GadreTsai11}, \cite{GadreHironakaKentLeininger13}). 
\end{enumerate}

Hence, it is enough to show that the number $q$ above is uniformly bounded by a constant.

As explained in the proof of Theorem 5.1 in \cite{BaikShin18}, each puncture is contained in a distinct ideal polygons obtained as connected components of the complement of $\tau$. Let $k_1$ be the number of punctures contained in monogons and $k_2$ be the number of punctures contained in bigons. Let $\mathcal{S}$ be the set of singularities of the invariant foliation for $\phi$ whose index is greater than equals to $3$. 

For each singularity $s \in \mathcal{S}$, let $P_s$ denote its index. By the Euler-Poincar\'{e} formula (\cite{FLP}) and the fact that $P_s \geq 3$, we have 
$$ k_1 \geq \dfrac{n-k_2+4-4g}{2} > \dfrac{4\epsilon n - k_2 + 4}{2}.$$
The second inequality follows from our hypothesis $g < (1/4 - \epsilon) n$. 

Now we are going to divide the situation into two cases. First, let us assume that $k_2 < 2 \epsilon n$. Then $k_1 > \epsilon n + 2$. 

Suppose there are at least $N$ real branches attached at the monogons containing punctures. Then there are in total at least $k_1 N / 2$ many real branches in $\tau$. Since $k_1 > \epsilon n + 2$, we have 
$$ k_1 N / 2 \geq (N/2)(\epsilon n + 2), $$
which give us a lower bound on the number of real branches. 

On the other hand, by Fact (2) above, the number of real branches is bounded above by $9|\chi(S_{g,n})| = 9(3n/2 - 2)$. Hence, if $N$ is sufficiently large, we get a contradiction. 

This shows that the there exists a uniform number $N$ such that there exists at least one monogon where the number of attached real branches is bounded above by $N$. Since these real branches are permuted by $\phi$, this gives a uniform upper bound on $q$ in Fact (3). 

For the second case, let us assume $k_2 \geq 2 \epsilon n$. Again, suppose there are at least $N$ real branches at each of the bigons containing punctures. Then there are at least $Nk_2/2 \geq N\epsilon n$ many real branches in $\tau$. Since the number of real branches is bounded above by $9(3n/2 - 2)$, for sufficiently large $N$, we get a contradiction as before. This implies there exists a uniform number $N$ such that $\tau$ has at least one bigon containing a puncture where less than $N$ real branches are attached. This again gives a uniform upper bounded on $q$. 

In either case, since $q$ is uniformly bounded by a constant, we get the desired result. 

\end{proof}

The main result of this paper is that $L_C(\mathcal{I}(S_g^n, P)) \asymp 1/|\chi(S_g^n)|$ for any partition $P$ of $\partial S_g^n$. Hence, from the perspective of Conjecture \ref{conj:punctured} (1), $\Mod(S_g^n)$ and their proper normal subgroups seem to be not  distinguishable by the least asymptotic translation length on the curve complex unlike the case of closed surfaces. 

On the other hand, in the case of punctured surface, it is much more hopeful. If one can show Conjecture \ref{conj:punctured}, then it would be a strong partial evidence toward that $\Mod(S_{g,n})$ and their proper normal subgroups are distinguishable by the least asymptotic translation length on the curve complex. 

Furthermore, one way to define the Torelli subgroup $\mathcal{I}_{g,n}$ of $\Mod(S_{g,n})$ is to define it as $\Phi_\ast (\mathcal{I}(S_g^n, P))$ where $P$ is the maximal partition of the boundary components. Then $\mathcal{I}_{g,n}$. In this case, our main result implies that $L_C(\mathcal{I}_{g,n}) \asymp 1/|\chi(S_{g,n})|$, hence it is an additional partial evidence. 

In summary, we propose the following 
\begin{conj} 
There exists a uniform constant $C >0$ such that for any proper normal subgroup $H$ of $\Mod(S_{g,n})$, $L_C(H) \geq C/|\chi(S_{g,n})|$. 
\end{conj}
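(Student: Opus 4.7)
The plan is to extend the Bestvina--Handel / Gadre--Tsai train track machinery, which already underlies both the lower bound for $\I(S_g^n,P)$ in Section 3 and the conditional bound for $\PMod(S_{g,n})$, to an arbitrary proper normal subgroup $H$. The first reduction is to pseudo-Anosov elements: by acylindrical hyperbolicity of the $\Mod(S_{g,n})$-action on $\C_{g,n}$ (Dahmani--Guirardel--Osin), any nontrivial normal subgroup either lies in the (finite) pointwise stabilizer of $\C_{g,n}$ or contains pseudo-Anosovs, so $L_\C(H)$ is realized by pseudo-Anosov elements.

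Next, pick a pseudo-Anosov $f \in H$ with $\el(f)$ close to $L_\C(H)$, and let $\tau$ be its Bestvina--Handel invariant train track with transition matrix $M_{\mathcal{R}}$ on real branches. By Proposition 2.2 of \cite{BaikShin18}, $\el(f) \gtrsim 1/(q|\chi(S_{g,n})|)$ where $q$ is the smallest positive integer such that $M_\mathcal{R}^q$ has a positive diagonal entry. Since the number of real branches of $\tau$ is bounded by $9|\chi(S_{g,n})|$, the desired lower bound follows at once from a universal upper bound $q \le C_0$ that is independent of $g$, $n$, and $H$.

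The heart of the proof is therefore to extract such a universal bound on $q$ from the mere fact that $H$ is a proper normal subgroup. The proposed mechanism is a curve-complex analog of the Lanier--Margalit normal-generation principle: we aim to prove that if $q$ is large, then some iterate of $f$ agrees on a large sub-track with a mapping class close to a Dehn twist (or more generally a root of a Dehn twist) along an essential curve, and in particular the normal closure of $f$ must contain a power of a Dehn twist. Since Dehn twists normally generate $\Mod(S_{g,n})$, this would contradict the properness of $H$. Equivalently, one can try to argue directly: any pseudo-Anosov with sufficiently small $\el(f)\cdot|\chi(S_{g,n})|$ normally generates $\Mod(S_{g,n})$; then the conjecture follows by contraposition.

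The main obstacle is precisely Step 3. In the Torelli setting of the main theorem one has a clean homological input (the Lefschetz number is $2-2g<0$, which via Tsai's theorem forces a rectangle in the Markov partition to intersect its image), but for a general proper normal subgroup no such uniform homological hook is available. Translating the Lanier--Margalit ``well-suited curve'' / normal-generation criterion, which is phrased in terms of stretch factor and the dilatation of the invariant foliation, into a statement about the orbit structure of real branches of the Bestvina--Handel train track is not routine: the curve-complex translation length sees only the combinatorial dynamics on $\tau$, whereas the Lanier--Margalit argument uses the metric expansion of the foliation. Bridging this gap, perhaps through a quantitative version of the Masur--Minsky subsurface projection machinery aligned with the Bestvina--Handel folding sequence, is where genuinely new input is needed and is the obstacle I expect to dominate the proof.
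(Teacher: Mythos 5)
This statement is stated in the paper as a conjecture, not a theorem: the authors offer no proof, only evidence (the main theorem that $L_{\C}(\I(S_g^n,P)) \asymp 1/|\chi(S_g^n)|$, and Theorem 5.2, which proves $L_{\C}(\PMod(S_{g,n})) \asymp 1/|\chi(S_{g,n})|$ under the extra hypothesis $g < (1/4-\epsilon)n$). Your proposal is therefore not, and cannot be, checked against a paper proof; what matters is whether it closes the gap on its own, and it does not. Your Steps 1--2 are fine and exactly parallel the paper's machinery: reduce to pseudo-Anosov elements (any infinite normal subgroup contains one), pass to the Bestvina--Handel invariant train track, invoke Proposition 2.2 of \cite{BaikShin18} to get $\ell_{\C}(f) \gtrsim 1/(q|\chi(S_{g,n})|)$, and note that the number of real branches is at most $9|\chi(S_{g,n})|$. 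But this only reduces the conjecture to producing a uniform bound on $q$ from the sole hypothesis that $f$ lies in a proper normal subgroup, and your Step 3 --- the claimed Lanier--Margalit-type principle that a pseudo-Anosov with $\ell_{\C}(f)\cdot|\chi(S_{g,n})|$ sufficiently small must normally generate $\Mod(S_{g,n})$ --- is asserted as a goal, not proved. You say so yourself. That missing implication is not a technical lemma one can wave at: it is essentially equivalent to the conjecture (by the very contraposition you describe), and it is precisely the kind of statement the paper flags as open, extending Question 1.2 of \cite{BaikShin18}.

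It is also worth noting why the analogous step succeeds in the two places the paper does carry it out, since this shows what is lacking in general. For $\I(S_g^n,P)$ the uniform bound on $q$ (in effect, $q=1$) comes from a concrete invariant of the subgroup: triviality of the action on homology forces the Lefschetz number to be $2-2g<0$, and Tsai's theorem then gives a rectangle meeting its image. For $\PMod(S_{g,n})$ with $g<(1/4-\epsilon)n$, the bound on $q$ comes from an Euler--Poincar\'{e} counting argument that needs many punctures sitting in monogons or bigons of the complement of $\tau$ --- a hypothesis-specific count with no analog for an abstract proper normal subgroup $H$, which carries no homological or puncture-counting structure. Your proposal correctly identifies this as the obstacle (bridging curve-graph combinatorics on $\tau$ with a normal-generation criterion \`{a} la Lanier--Margalit, which is phrased via stretch factors), but identifying the obstacle is not the same as overcoming it. As written, the proposal is a research plan for the open conjecture, not a proof.
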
 

This is an extended version of Question 1.2 in \cite{BaikShin18}.

\nocite{davis2000poincare}
\nocite{brown2012cohomology}

%
%

\medskip
\bibliographystyle{alpha} 
\bibliography{torelli}

\end{document}